\documentclass[11pt]{amsart}
\usepackage{amssymb}
\usepackage{amssymb, amsmath}
\usepackage{verbatim}
\usepackage{framed}
\numberwithin{equation}{section}
 \setlength{\oddsidemargin}{0mm}
\setlength{\evensidemargin}{0mm} \setlength{\topmargin}{-15mm}
\setlength{\textheight}{220mm} \setlength{\textwidth}{155mm}

\let\d=\partial

\let\wt=\widetilde
\let\wh=\widehat


\def\cB{{\mathcal B}}
\def\cC{{\mathcal C}}

\def\N{{\mathbb N}}
\def\R{{\mathbb R}}
\def\T{{\mathbb T}}
\def\Z{{\mathbb Z}}

\def\ep{\varepsilon}

\def\virgp{\raise 2pt\hbox{,}}
\def\cdotpv{\raise 2pt\hbox{;}}
\def\Id{\mathop{\rm Id}\nolimits}

\def\div{ \hbox{\rm div}\,  }

\def\divy{\, \hbox{\rm div}_y\,  }
\def\divu{\, \hbox{\rm div}_u\,  }

\newcommand{\Int}{\displaystyle \int}

\def\du{\delta\!u}
\def\dQ{\delta\!Q}
\def\dA{\delta\!A}

\newtheorem{thm}{Theorem}[section]
\newtheorem{lem}{Lemma}[section]
\newtheorem{rmk}{Remark}[section]
\newtheorem{cor}{Corollary}[section]
\newtheorem{prop}{Proposition}[section]

\newcommand{\ben}{\begin{eqnarray}}
\newcommand{\een}{\end{eqnarray}}
\newcommand{\beno}{\begin{eqnarray*}}
\newcommand{\eeno}{\end{eqnarray*}}
\mathsurround 1pt
\begin{document}
\title[]{The incompressible Navier-Stokes equations in vacuum}
\author[R. Danchin]{Rapha\"{e}l Danchin}
\address[R. Danchin]{Universit\'{e} Paris-Est,  LAMA (UMR 8050), UPEMLV, UPEC, CNRS,
 61 avenue du G\'{e}n\'{e}ral de Gaulle, 94010 Cr\'{e}teil Cedex, France.} \email{raphael.danchin@u-pec.fr}
\author[P.B. Mucha]{Piotr Bogus\l aw Mucha}
\address[P.B. Mucha]{Instytut Matematyki Stosowanej i Mechaniki,
 Uniwersytet Wars\-zawski, 
ul. Banacha 2,  02-097 Warszawa, Poland.} 
\email{p.mucha@mimuw.edu.pl}

\begin{abstract}  We are concerned with  the existence and uniqueness issue  
for  the inhomogeneous incompressible Navier-Stokes equations supplemented with  $H^1$ initial velocity 
and \emph{only bounded nonnegative density}. In contrast with all the previous works on 
that topics, we do not require  regularity or positive
lower bound for the initial density, or  compatibility conditions for  the initial velocity, and still obtain \emph{unique} solutions. 
Those solutions are  global in the two-dimensional  case for general  data, and in the three-dimensional case if the velocity satisfies a 
suitable scaling invariant smallness condition.
As a straightforward application, we provide  a complete answer to Lions' question in \cite{PLL}, page 34, concerning the
evolution of a drop of incompressible viscous fluid in the vacuum. 
\end{abstract}
\maketitle

\section{Introduction}


Since the pioneering works by Lichtenstein \cite{lichtenstein}, Wolibner \cite{W} and Leray \cite{Leray} at the beginning 
of the XXth century, studying  fluid mechanics  models  has  generated
important advances in the development of mathematical analysis. Very schematically, 
classical fluid mechanics  is divided into two types of models  corresponding to whether the fluid is
homogeneous or not. 
 On the one hand, the  incompressible Navier-Stokes equations  
 $$ \begin{array}{lcl}
 v_t +v \cdot \nabla v - \mu\Delta v +\nabla P =0 & \mbox{in} & \R_+\times\Omega,\\[1ex]
\div v=0 & \mbox{in} &  \R_+\times\Omega,\\
 \end{array}\leqno(NS)$$ 
  govern the evolution of   the velocity field $v=v(t,x)\in\R^d$ 
 and pressure function   $P=P(t,x)\in\R$ 
 of a homogeneous  incompressible viscous fluid with constant viscosity $\mu>0$
  (here  $t\geq 0$ stands for the time variable  and $x\in \Omega,$ for the position in the fluid domain  $\Omega\subset\R^d$).
On the other hand,  the evolution of compressible viscous flows obeys the following system 
$$
 \begin{array}{lcl}
  \rho_t+ \div(\rho v) =0 & \mbox{in} & \R_+\times\Omega, \\[1ex]
\rho v_t +\rho v \cdot \nabla v - \mu\Delta v -\mu'\nabla\div v +\nabla P =0 & \mbox{in} &  \R_+\times\Omega,
 \end{array}\leqno(CNS)
 $$
where  $\rho=\rho(t,x)\geq0$ stands for the density of the fluid and $P=P(\rho)$ is a given 
pressure function. 
In between $(NS)$ and $(CNS),$   we find the \emph{inhomogeneous} Navier-Stokes 
system  that governs the evolution of incompressible viscous flows with \emph{nonconstant} density.
That system   founds his place in the theory of geophysical flows, where
 fluids are incompressible but with variable density, like in  oceans or rivers. 
In the present paper, we are concerned with that latter  system that reads:
$$
 \begin{array}{lcl}
  \rho_t+v \cdot \nabla \rho =0 & \mbox{in} & \R_+\times\Omega, \\[1ex]
\rho v_t +\rho v \cdot \nabla v - \mu\Delta v +\nabla P =0 & \mbox{in} &  \R_+\times\Omega,\\[1ex]
\div v=0 & \mbox{in} &  \R_+\times\Omega.
 \end{array}\leqno(INS)
 $$
The unknowns are  the velocity field $v=v(t,x),$  the  density $\rho=\rho(t,x)$ and  the pressure $P=P(t,x).$
We shall  assume that the fluid domain $\Omega$ is either 
the   torus $\T^d$ (that is the fluid domain is $]0,1[^d$ and  $(INS)$ is supplemented with periodic boundary conditions), 
or a $\cC^2$ simply connected bounded domain of $\R^d$ with $d=2,3.$ 
In that latter case, System   $(INS)$ is supplemented  with homogeneous 
Dirichlet boundary conditions for the velocity.   

\medbreak
It is well known that sufficiently smooth solutions to $(INS)$
fulfill   for all $t\geq0$:
\begin{itemize}
\item The energy balance:
\begin{equation}\label{eq:ene}
 \frac 12 \frac{d}{dt} \int_{\Omega} \rho|v|^2 \,dx +\mu \int_{\Omega} |\nabla v|^2\, dx =0.
\end{equation}
\item The  conservation of total momentum (in the case $\Omega=\T^d$): 
\begin{equation}\label{eq:m}
 \int_{\Omega} (\rho v)(t,x)\, dx=\int_{\Omega} (\rho_0 v_0)(x) \,dx.
\end{equation}
\item  The conservation of total mass: 
\begin{equation}\label{eq:M}
\int_{\Omega}\rho(t,x)\,dx=\int_{\Omega}\rho_0(x)\,dx.
\end{equation}
\item Any Lebesgue norm of $\rho_0$ is preserved through the evolution, and 
\begin{equation}\label{eq:infsup}
\inf_{x\in\Omega} \rho(t,x)=\inf_{x\in\Omega} \rho_0(x)\quad\hbox{and}\quad
\sup_{x\in\Omega} \rho(t,x)=\sup_{x\in\Omega} \rho_0(x).
\end{equation}
\end{itemize}

 The constant density case, that is System  $(NS),$ has  been intensively 
investigated for the  last 80 years. Since  the   works by J. Leray  \cite{Leray} in 1934
and  O. Ladyzhenskaya \cite{Lad} in 1959 (see also \cite{LP}),   it is known that:
\begin{itemize}
\item In dimension $d=2,3$,  for any $v_0$ in $L_2(\Omega)$ with $\div v_0=0,$ there exist global weak solutions 
 (the so-called turbulent solutions)  to $(NS),$ satisfying 
$$
\|v(t)\|_2^2+2\mu\int_0^t\|\nabla v\|_2^2\,d\tau\leq  \|v_0\|_2^2.
$$
\item In the 2D case, turbulent solutions are unique, and 
additional regularity is preserved. In particular, if $v_0$ is in  $H^1_0(\Omega)$, then  $v\in\cC_b(\R_+;H^1_0(\Omega))$.
\item In the 3D case, if  in addition $v_0$ in $H^1_0(\Omega)$ and 
\begin{equation}\label{eq:small-vit}
\mu^{-2}\|v_0\|_2\|\nabla v_0\|_2\quad\hbox{is small enough}
\end{equation} then there exists a unique   global solution    $(v,\nabla P)$ with $v$ in $\cC_b(\R_+;H^1_0(\Omega))$.
\end{itemize}
For a large three-dimensional $v_0$  in $H_0^1(\Omega),$  we have  a unique local-in-time smooth solution, 
but  proving that smoothness persists for all time  is essentially  the  global regularity issue  of one of the Millennium problems (see {\it http://www.claymath.org/millennium-problems}). 
\medbreak
As regards  the inhomogeneous Navier-Stokes equations,   the state-of-the-art says that  the weak solution theory is similar to  the one of the  homogeneous case, 
and so is the strong solution theory if, beside smoothness,   \emph{the density is bounded away from zero}. 
More precisely, the following results are available: 
\begin{itemize}
\item \emph{Global weak solutions with finite energy:}  If $d=2,3,$ 
whenever $0\leq\rho_0\leq\rho^*$  for some $\rho^*>0,$ and $\sqrt{\rho_0}\,v_0$ is in $L_2,$ there exists  a global distributional solution
$(\rho,v,P)$ to $(INS)$ satisfying \eqref{eq:M}, and  such that for all $t\geq0,$
\begin{equation}\label{eq:uL2}
\|\sqrt{\rho(t)}\,v(t)\|_2^2+2\int_0^t\|\nabla v\|_2^2\,d\tau\leq  \|\sqrt{\rho_0}\,v_0\|_2^2,\qquad
\quad\hbox{and}\quad 0\leq \rho(t)\leq\rho^*.
\end{equation}
The constant viscosity case with  $\inf\rho_0>0$ has been solved by   A. Kazhikhov \cite{K}, 
then J. Simon \cite{S} removed the lower bound assumption on  $\rho_0,$ and  
P.-L. Lions \cite{PLL} proved that $\rho$ is in fact a  renormalized solution of the mass equation, 
which enabled him to consider also the case where  $\mu$ depends on $\rho$ (see also \cite{Des-DIE}). \smallbreak 
\item \emph{Global strong solutions in the 2D case:}
They have been first constructed by O. Ladyzhenskaya and V. Solonnikov in \cite{LS} in the bounded domain case, 
whenever   $v_0$ is in $H^1_0(\Omega)$ 
and $\rho_0$ is in $W^1_\infty$ with  $\inf\rho_0>0.$
\smallbreak
\item \emph{Strong solutions in the 3D case:}  Under the hypotheses of the 2D case, 
there exists a unique local-in-time maximal strong solution, 
and if  $v_0$ is small enough, then that solution is global (see \cite{LS}).  
\end{itemize}
\smallbreak

After the work of O. Ladyzhenskaya and V. Solonnikov \cite{LS}, 
a number of   papers have been devoted to the study of  strong solutions to $(INS)$
and, more particularly,  to  classes of data generating  regular unique solutions. 
Recent developments involve  two directions:
\begin{itemize}
\item \underline{Finding  minimum  assumptions for  uniqueness  in the nonvacuum case}: 
 Here one can mention the critical  regularity approach of \cite{Dan1, Dan2}
 where  density has to be continuous, bounded and bounded away from $0,$ 
 and relatively new  works  like    \cite{DM-cpam, DM-arma} (further improved in  \cite{CHW,DZP,HPZ, PZZ}), 
   relying on the use of Lagrangian coordinates, and  where the density
   need not be continuous.  Recently, in connection with Lions' question, lots of  attention has been brought to the case where the initial 
   density is given by    $$   \rho_0=\eta_1 1_{D_0}+\eta_2 1_{{}^c\!D_0},\qquad \eta_1,\eta_2>0,\qquad  D_0\subset\Omega.$$
   The main issue is whether the smoothness 
   of $D_0$ is preserved through the time evolution (see \cite{DZ,GGJ,LZ1,LZ2}).
\item \underline{Smooth data with allowance of vacuum}: 
As pointed out in \cite{Cho-Kim} (see also \cite{CHW}, and \cite{Germain} as regards the weak-strong uniqueness issue), one
can solve $(INS)$ uniquely  in presence of vacuum 
if $\rho_0$ is smooth enough \emph{and $v_0$ satisfies  the compatibility condition} 
\begin{equation}\label{eq:comp}
-\Delta v_0+\nabla P_0=\sqrt{\rho_0}\,g\quad\hbox{for some }\ 
g\in L_2(\Omega)\ \hbox{ and }\ P_0\in H^1(\Omega).
\end{equation} 
Very recently, in \cite{Li},  J. Li   pointed out that  Condition \eqref{eq:comp}
can be  removed  but, still, the density must be regular and  only local-in-time solutions are produced.  
\end{itemize}

If  it is not assumed that the density is bounded away from zero then the analysis  gets wilder, since the system degenerates (in vacuum regions, the term $\rho v_t$ in the momentum equation vanishes),
 and the general strong solution theory is still  open,  even in the 2D case. 
Our main goal is to show existence and uniqueness  for $(INS)$ supplemented with general  initial data satisfying 
 the following `minimal' assumptions:
\begin{equation}\label{id}
 v_0\in H^1_0(\Omega)\ \hbox{ with }\ \div v_0=0,  \mbox{ \ \ and \ \ }  0 \leq \rho_0 \leq \rho^*<+\infty.
\end{equation}
In other words, we  aim  at completing  the program initiated by J. Leray, 
\emph{for general inhomogeneous fluids with  just bounded initial density}, establishing that 
\begin{itemize}
 \item in the 2D case, for arbitrary initial data fulfilling just \eqref{id} there exists
 a  \underline{unique}  global-in-time solution with regular velocity;
  \item in the 3D case: for arbitrary initial data fulfilling  \eqref{id} there exist a 
   \underline{unique}   local-in-time  solution with regular velocity, that  is global  if
    \eqref{eq:small-vit} is fulfilled. 
\end{itemize}

Let us emphasize that, since  we do not require any regularity or positive lower bound for the density,  one can consider 
 `patches of density', that is  initial densities that are   characteristic functions of subsets of $\Omega$
 (this corresponds for instance to a drop of incompressible fluid in  vacuum or the opposite: a bubble of
 vacuum embedded in the fluid). As a consequence of our results, we  show   the persistence of  the interface  regularity 
 through the evolution, which constitutes a complete answer to  the question  raised by P.-L. Lions in \cite{PLL} at page 34.
    \smallbreak
    In order to prove the above existence and uniqueness results for $(INS)$ supplemented with 
    data satisfying just \eqref{id}, the main difficulty  is to propagate enough regularity for the velocity
 to ensure uniqueness, while  the density is rough and likely to vanish in some 
 parts of the fluid domain.  Recall that in most   evolutionary fluid mechanics models, 
the uniqueness issue is closely connected to the Lipschitz control of the flow of 
the velocity field $v$, hence to the fact  that $\nabla v$ is in $L_1(0,T;L_\infty(\Omega)).$ 
  The main breakthrough of our paper is that we  manage to 
 keep  the $H^1$ norm of the velocity under control for all time \emph{despite vacuum} and 
to  exhibit a parabolic gain of regularity  which is slightly weaker than the standard one, 
but still sufficient to eventually get   $\nabla v$ in $L_1(0,T;L_\infty(\Omega)).$ 
To achieve it, we   combine time weighted energy estimates in the spirit of  \cite{Li}, classical Sobolev embedding and  {\it shift of integrability} 
from time variable to space variable (more details are provided in the next section). 
\smallbreak
   We  shall assume throughout that the fluid  domain is either the  torus $\T^d$  or  a $\cC^2$   bounded domain of $\R^d,$
        the generalization to unbounded domains (even the whole space) 
   within our approach  being unclear  as regards global-in-time results.  
For simplicity, we  only consider $H^1$ initial velocity fields, even though it should be possible to have  less
regular data, like in \cite{PZZ}.  As regards the domain $\Omega,$ we do not strive for minimal regularity assumptions either.     
     \medbreak
   We follow the standard notation for the evolutionary PDEs.  By $\nabla$ we denote the gradient with respect to space variables, and by $\partial_t u$ or $u_t,$ the time derivative of function $u$.
   By $\|\cdot\|_p,$ we mean $p$-power Lebesgue norms over $\Omega$; $L_p(Q)$ is 
   the $p$-Lebesgue space over a set $Q$; we denote  by $H^s$ and $W^s_p$  the Sobolev
   (Slobodeckij for $s$ not integer) space,  and put $H^s=W^s_2.$
Generic constants are denoted by $C,$ and $A\lesssim B$ means that $A\leq CB.$

Finally, as a great part of our analysis will concern $H^1$ regularity and will work indistinctly in
a bounded domain or in the torus, we shall adopt slightly abusively the notation $H^1_0(\Omega)$
to designate the set of $H^1(\Omega)$ functions that vanish at the boundary if $\Omega$ is a bounded domain, 
or general $H^1(\T^d)$ functions if $\Omega=\T^d.$ 
   \medbreak
The rest of the paper unfolds as follows. In the next section, we state the main results. 
In Section \ref{s:existence}, we concentrate on a priori estimates and on  the proof of existence for $(INS)$
while  Section \ref{s:uniqueness} concerns the uniqueness  issue. 
A few technical results (in particular a key logarithmic interpolation inequality)
are postponed in the appendix.


\section{Results}

Here we   state of our  results and give a overview of the strategy   that we used to achieve them.
Let us first write out our main result in the two dimensional case.
 \begin{framed}
\begin{thm}\label{thm:d=2}
Let $\Omega$ be a $\cC^2$ bounded subset of $\R^2,$ or the torus $\T^2.$
Consider any  data $(\rho_0,v_0)$  in $L_\infty(\Omega)\times H^1_0(\Omega)$  satisfying  for some constant $\rho^*>0,$ 
 \begin{equation}\label{eq:data}
  0\leq \rho_0\leq\rho^*,\quad  \div v_0=0 
  \mbox{ \ \ and \ \ }    M:= \int_{\Omega} \rho_0\, dx >0. 
 \end{equation}
 Then System $(INS)$ supplemented with data $(\rho_0,v_0)$ admits 
a unique global solution $(\rho,v,\nabla P)$ satisfying  \eqref{eq:ene}, \eqref{eq:m} (in the case $\Omega=\T^d$), \eqref{eq:M}, \eqref{eq:infsup}, 
 the following properties of regularity:
$$\rho\in L_\infty(\R_+;L_\infty(\Omega)),\quad v \in L_\infty(\R_+;H^1_0(\Omega)),\quad 
\sqrt{\rho}v_t, \nabla^2 v,\nabla P \in L_2(\R_+;L_2(\Omega))$$
and also, for all $1\leq r<2$ and $1\leq m<\infty,$
$$\nabla(\sqrt t P),\,\nabla^2(\sqrt t v) \in L_\infty(0,T;L_r(\Omega))\cap L_2(0,T;L_m(\Omega))\quad\hbox{for all } T>0.$$
Furthermore,  we have   $\sqrt\rho v\in \cC(\R_+;L_2(\Omega)),$
$\rho\in\cC(\R_+;L_p(\Omega))$ for all finite $p,$ and   $v \in H^{\eta}(0,T;L_p(\Omega))$ for all $\eta<1/2$  and $T>0.$
\end{thm}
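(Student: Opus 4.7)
The plan is to (a) regularize the data so that classical Ladyzhenskaya--Solonnikov theory applies, (b) derive a priori estimates independent of the regularization, (c) pass to the limit for existence, and (d) prove uniqueness by passing to Lagrangian coordinates. Concretely, I would set $\rho_0^n := \rho_0 + 1/n$ (mollifying $v_0$ if needed) and invoke \cite{LS} to obtain smooth global solutions $(\rho^n,v^n,P^n)$; all bounds below must depend only on $\rho^*$, $\|\nabla v_0\|_2$ and $T$.

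The backbone of the analysis is a two-level energy scheme. The basic identity \eqref{eq:ene} is immediate. To control the $H^1$ norm of $v$, I would test the momentum equation with $v_t$, getting
$$\mu\,\frac{d}{dt}\|\nabla v\|_2^2 + 2\int_\Omega \rho|v_t|^2\,dx = -2\int_\Omega \rho(v\cdot\nabla v)\cdot v_t\,dx,$$
and bound the right-hand side via the H\"older--Gagliardo--Nirenberg chain together with the stationary Stokes estimate
$$\|\nabla^2 v\|_2+\|\nabla P\|_2\lesssim \sqrt{\rho^*}\,\|\sqrt\rho v_t\|_2+\rho^*\|v\|_\infty\|\nabla v\|_2.$$
The Stokes constant depends on $\|\rho\|_\infty$ but crucially \emph{not} on $\inf\rho$, so vacuum is not an obstruction; after absorbing the highest-order terms, a Gronwall argument in 2D yields the global bounds $v\in L_\infty(\R_+;H^1_0(\Omega))$ and $\sqrt\rho v_t,\nabla^2 v,\nabla P\in L_2(\R_+;L_2(\Omega))$.

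Since no compatibility condition is assumed, $v_t(0)$ need not lie in $L_2$, and a further layer is needed to reach the regularity required for uniqueness. Following \cite{Li}, I would use time-weighted estimates: differentiating the momentum equation in time and testing with $t v_t$ gives control of $\sqrt t\,\sqrt\rho v_t$ in $L_\infty(0,T;L_2(\Omega))$ and of $\sqrt t\,\nabla v_t$ in $L_2(0,T;L_2(\Omega))$. Via the Stokes estimate and Sobolev embedding this delivers the $\sqrt t\,\nabla^2 v$ and $\sqrt t\,\nabla P$ bounds stated in the theorem. The decisive final step---obtaining $\nabla v\in L_1(0,T;L_\infty(\Omega))$---is achieved by \emph{shifting integrability from time to space}: combining the weighted $\nabla^2 v\in L_2(0,T;L_m(\Omega))$ bound for $m$ large with the logarithmic interpolation inequality postponed to the appendix converts it into the desired $L_1$-in-time, $L_\infty$-in-space control.

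With these estimates in hand, compactness of the approximate sequence is standard: strong $L_2$ convergence of $\sqrt{\rho^n} v^n$, weak-$\star$ convergence of $\rho^n$, and DiPerna--Lions renormalization for the transport equation justify passage to the limit and yield the continuity-in-time statements. For uniqueness, the strategy is to transfer the problem to Lagrangian coordinates along the flow $X$ of $v$: the $L_1(0,T;L_\infty(\Omega))$ bound just obtained makes $X(t,\cdot)$ bi-Lipschitz, and in Lagrangian variables the density equation trivializes to $\rho\circ X = \rho_0$. Comparing two solutions then amounts to estimating the difference of two velocities through a linear Stokes-type system with the common, fixed (but rough) coefficient $\rho_0$; a Gronwall argument in an adequate energy norm closes uniqueness. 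The main obstacle throughout is the tension between vacuum---which degenerates the momentum equation---and the need to propagate smoothness past $t=0$ without compatibility conditions; it is overcome only through the combined use of time weights, the logarithmic interpolation inequality of the appendix, and the vacuum-insensitive Stokes estimate.
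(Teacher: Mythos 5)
Your overall architecture (regularize, two-level energy scheme, shift of integrability, Lagrangian uniqueness) matches the paper, but there is a genuine gap at the very first and most delicate step: closing the $H^1$ energy estimate in the presence of vacuum. After testing the momentum equation with $v_t$, the troublesome term is $\int_\Omega\rho|v\cdot\nabla v|^2\,dx$, which by H\"older and Ladyzhenskaya's inequality \eqref{eq:lad} is controlled by $\|\sqrt\rho\,|v|^2\|_2\,\|\nabla v\|_2\|\nabla^2v\|_2$; to run a Gronwall argument you must bound $\|\sqrt\rho\,|v|^2\|_2$ (equivalently $\|v\|_4$, hence $\|v\|_2$) by quantities the energy controls. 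When $\inf\rho_0>0$ this follows from $\|v\|_2\le(\inf\rho)^{-1/2}\|\sqrt\rho v\|_2$, but with vacuum the energy only controls $\|\sqrt\rho v\|_2$ and the argument breaks. Your claim that the vacuum-insensitivity of the Stokes constant suffices and that ``a Gronwall argument in 2D yields the global bounds'' skips exactly this obstruction. The paper's resolution is the Desjardins logarithmic interpolation inequality \eqref{d0}, $\bigl(\int\rho|v|^4\bigr)^{1/2}\le C\|\sqrt\rho v\|_2\|\nabla v\|_2\log^{1/2}(\cdots)$, proved in the appendix by frequency truncation, combined with a weighted Poincar\'e inequality (using conservation of momentum and mass to control the mean of $v$) and a \emph{logarithmic} Gronwall lemma of the form $\frac{d}{dt}X\le fX\log(e+X)$, which yields the double-exponential bound \eqref{b8}. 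You do cite ``the logarithmic interpolation inequality postponed to the appendix,'' but you deploy it in the wrong place (the shift-of-integrability step, where only Stokes maximal regularity, Sobolev embedding and H\"older in time are needed); it is indispensable at the first energy level, and without it your Gronwall does not close.

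Two secondary points. First, $\sqrt t\,\nabla^2 v\in L_2(0,T;L_m)$ with $m$ large gives $\sqrt t\,\nabla v\in L_2(0,T;L_\infty)$, but then $\int_0^T\|\nabla v\|_\infty\,dt\le\|\sqrt t\,\nabla v\|_{L_2(0,T;L_\infty)}\bigl(\int_0^Tt^{-1}dt\bigr)^{1/2}$ diverges; you need integrability $L_p$ in time with $p>2$ (the paper uses $p=3$, available because $\sqrt{\rho t}\,v_t\in L_\infty(0,T;L_2)$ gives the full range $p\in[2,\infty]$ in \eqref{s7a}). Second, in the Lagrangian uniqueness step the difference $\du=u^2-u^1$ is not $\div_{u^1}$-free, so before testing with $\du$ you must solve the twisted divergence equation of Lemma \ref{l:div} to split off a corrector $w$, and you must use the weighted Poincar\'e inequality to recover $\|z\|_{H^1}$ from $\|\sqrt{\rho_0}z\|_2+\|\nabla z\|_2$; ``a Gronwall argument in an adequate energy norm'' hides both of these necessary devices.
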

\end{framed}
In the three  dimensional case we have:
\begin{framed} \begin{thm}\label{thm:d=3}
Let $\Omega$ be a $\cC^2$ bounded subset of $\R^3$ or the torus $\T^3.$
There exists a  constant $c>0$ such that 
for  any  data $(\rho_0,v_0)$  in $L_\infty(\Omega)\times H^1_0(\Omega)$  satisfying  
\eqref{eq:data} and 
\begin{equation}\label{eq:smalld=3}
(\rho^*)^{\frac32}\|\sqrt{\rho_0}\, v_0\|_{2}\|\nabla v_0\|_2\leq c\mu^2, 
\end{equation}
 System $(INS)$ supplemented with data $(\rho_0,v_0)$ admits 
a unique global solution $(\rho,v,\nabla P)$ satisfying Identities \eqref{eq:ene}, \eqref{eq:m}  (in the case $\Omega=\T^d$), \eqref{eq:M}, \eqref{eq:infsup}, and
such that
$$\displaylines{\rho\in L_\infty(\R_+;L_\infty(\Omega)),\quad v \in L_\infty(\R_+;H^1_0(\Omega)),\quad 
\sqrt{\rho}v_t, \nabla^2 v,\nabla P \in L_2(\R_+;L_2(\Omega))\cr
\hbox{and}\quad\nabla(\sqrt t P),\,\nabla^2(\sqrt t v) \in L_\infty(0,T;L_2(\Omega))\cap L_2(0,T;L_6(\Omega))\quad\hbox{for all } T>0.}$$
Furthermore,  we have   $\sqrt\rho v\in \cC(\R_+;L_2(\Omega)),$
$\rho\in\cC(\R_+;L_p(\Omega))$ for all finite $p,$ and   $v \in H^{\eta}(0,T;L_6(\Omega))$ for all $\eta<1/2$  and $T>0.$
\end{thm}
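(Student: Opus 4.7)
The scheme I would follow for Theorem \ref{thm:d=3} closely parallels the one for Theorem \ref{thm:d=2}, the essential difference being that in three dimensions the Sobolev embeddings are supercritical for the convective term, and the smallness hypothesis \eqref{eq:smalld=3} is precisely what is needed to close the nonlinear $H^1$ estimate globally. The plan is to proceed in four stages: (i) derive global a priori estimates for regular approximate solutions, (ii) obtain time-weighted higher regularity à la Li \cite{Li}, (iii) construct solutions by regularizing the initial data and passing to the limit, and (iv) prove uniqueness.

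For (i), the basic energy identity \eqref{eq:ene} gives $\|\sqrt\rho(t)\,v(t)\|_2^2+2\mu\int_0^t\|\nabla v\|_2^2\,d\tau\le\|\sqrt{\rho_0}\,v_0\|_2^2$. To control $\|\nabla v\|_2$, I test the momentum equation against $v_t$ to obtain
\begin{equation*}
\frac{\mu}{2}\frac{d}{dt}\|\nabla v\|_2^2+\frac12\|\sqrt\rho\, v_t\|_2^2\lesssim \rho^*\|v\|_6^2\|\nabla v\|_3^2.
\end{equation*}
The embedding $H^1_0\hookrightarrow L_6$, the interpolation $\|\nabla v\|_3\lesssim\|\nabla v\|_2^{1/2}\|\nabla^2 v\|_2^{1/2}$, and the Stokes regularity $\|\nabla^2 v\|_2+\|\nabla P\|_2\lesssim\sqrt{\rho^*}\|\sqrt\rho\,v_t\|_2+\rho^*\|v\|_6\|\nabla v\|_3$ allow me to dominate the right-hand side by $\tfrac14\|\sqrt\rho\,v_t\|_2^2+c\|\nabla^2 v\|_2^2+C(\rho^*)^3\mu^{-5}\|\nabla v\|_2^6$ for a small $c$, so that after absorption I end up with a differential inequality of the form
\begin{equation*}
\frac{d}{dt}y+\alpha\,\|\sqrt\rho\,v_t\|_2^2+\alpha\,\|\nabla^2 v\|_2^2\le \beta\,y^3,\qquad y:=\mu\|\nabla v\|_2^2.
\end{equation*}
Combined with the energy bound, \eqref{eq:smalld=3} ensures that $y(0)$ is small enough to keep $y$ bounded (in fact, decaying) for all time, and yields $\sqrt\rho\,v_t,\nabla^2 v,\nabla P\in L_2(\R_+;L_2)$.

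For (ii), I multiply the time-differentiated momentum equation by $t\,v_t$ and integrate, producing a bound on $\sqrt{t}\sqrt\rho\,v_t$ in $L_\infty(L_2)$ and on $\sqrt t\,\nabla v_t$ in $L_2(L_2)$. Feeding this into the Stokes system yields $\nabla^2(\sqrt t v),\nabla(\sqrt t P)\in L_\infty(0,T;L_2)\cap L_2(0,T;L_6)$; the exponents $2$ and $6$ (rather than the more flexible pair of Theorem \ref{thm:d=2}) reflect the rigidity of the 3D Sobolev scale. This step is delicate because vacuum forbids inverting $\rho$, so every estimate must be written in terms of $\sqrt\rho\,v_t$ rather than $v_t$. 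The fractional time regularity $v\in H^\eta(0,T;L_6)$ for $\eta<1/2$ is then obtained by interpolation between $v\in L_\infty(H^1_0)\subset L_\infty(L_6)$ and $\sqrt t\,v_t\in L_2(L_6)$. For (iii), I regularize by setting $\rho_0^\eps:=\rho_0+\eps$ and mollifying $v_0$, invoke the classical existence theory of Ladyzhenskaya and Solonnikov \cite{LS}, verify that the estimates in (i)–(ii) are uniform in $\eps$ (the smallness \eqref{eq:smalld=3} being stable under such approximations since $\eps\to 0$), and pass to the limit using the continuity of $\rho$ and $\sqrt\rho\,v$ in time. For (iv), uniqueness follows once one shows $\nabla v\in L_1(0,T;L_\infty)$ via a logarithmic interpolation from the time-weighted $L_2(L_6)$ bound on $\nabla^2 v$; a stability argument in Lagrangian coordinates on the difference of two solutions then closes the proof.

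The principal obstacle is the cubic-type growth $y'\lesssim y^3$ in the $H^1$ estimate: unlike in two dimensions, where Ladyzhenskaya's inequality produces a subcritical power absorbable by Grönwall, here nothing but the smallness \eqref{eq:smalld=3} can prevent finite-time blow-up of the upper bound, and the condition is essentially the scaling-invariant analogue of Leray's classical 3D criterion, with the factor $(\rho^*)^{3/2}$ encoding the worst-case behaviour in the presence of vacuum. A secondary technical point is that the Stokes estimate must be applied to the genuine (density-free) Stokes operator, with the density incorporated only into the source term, so that no lower bound on $\rho$ is ever required.
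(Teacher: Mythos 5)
Your overall architecture matches the paper's (energy identity, $v_t$-test plus Stokes regularity to close the $H^1$ bound, time-weighted estimates for $v_t$, shift of integrability, approximation by smooth nonvacuum data, Lagrangian uniqueness), but two steps would not close as literally written. The first is the derivation of the smallness condition \eqref{eq:smalld=3} from your differential inequality $y'+\alpha\|\sqrt\rho v_t\|_2^2+\alpha\|\nabla^2v\|_2^2\le\beta y^3$ with a \emph{constant} $\beta$. Smallness of $y(0)=\mu\|\nabla v_0\|_2^2$ alone would give a condition on $\|\nabla v_0\|_2$ only, which is not \eqref{eq:smalld=3} and is not scaling invariant. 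What actually produces the product $\|\sqrt{\rho_0}v_0\|_2\|\nabla v_0\|_2$ is that the Gronwall coefficient must be \emph{time-integrable via the energy dissipation}: write $\beta y^3=(\beta\mu\|\nabla v\|_2^2)\,y^2$, divide by $y^2$, and use $\int_0^\infty\|\nabla v\|_2^2\,dt\le\|\sqrt{\rho_0}v_0\|_2^2/(2\mu)$ from \eqref{eq:ene}; then global boundedness requires precisely $y(0)\,\beta\,\|\sqrt{\rho_0}v_0\|_2^2\lesssim 1$, i.e.\ \eqref{eq:smalld=3}. This is exactly how Proposition \ref{l:H3} proceeds (there the integrable factor is kept explicitly as $f(t)=C(\rho^*)^6\|\sqrt\rho v\|_2^2\|\nabla v\|_2^2$ and the ODE reads $X'\le fX^3$). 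Your sentence ``the energy bound ensures that $y(0)$ is small enough'' conflates these two mechanisms; as stated, the step does not yield the announced condition.

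The second gap is the claim that $\nabla v\in L_1(0,T;L_\infty)$ follows ``via a logarithmic interpolation from the time-weighted $L_2(L_6)$ bound on $\nabla^2 v$.'' The endpoint $\nabla^2(\sqrt t\,v)\in L_2(0,T;L_6)$ gives $\sqrt t\,\nabla v\in L_2(0,T;L_\infty)$, and then $\int_0^T\|\nabla v\|_\infty\,dt\le\|\sqrt t\,\nabla v\|_{L_2(0,T;L_\infty)}\bigl(\int_0^T t^{-1}dt\bigr)^{1/2}$ diverges: the $L_2$-in-time endpoint is exactly borderline and no logarithmic trick rescues it. One must first interpolate between the two time-weighted endpoints $L_\infty(0,T;L_2)$ and $L_2(0,T;L_6)$ to reach $\nabla^2(\sqrt t\,v)\in L_p(0,T;L_r)$ with $2<p<4$ and $3<r<6$, so that $W^1_r\hookrightarrow L_\infty$ and H\"older in time beats the weight $t^{-1/2}$ (this is Lemma \ref{l:high3}, yielding $\nabla v\in L_s(0,T;L_\infty)$ for all $s<4/3$). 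Two smaller points: $\rho_0^\eps:=\rho_0+\eps$ is neither smooth nor $\le\rho^*$, so the classical theory of \cite{LS} does not apply to it directly — mollify and keep $\eps\le\rho_0^\eps\le\rho^*$; and the Lagrangian uniqueness step is not a bare stability estimate: since the transformed divergence constraint is inhomogeneous one must split the difference as $\du=w+z$ with $w$ solving the twisted divergence equation of Lemma \ref{l:div}, and since $\rho_0$ vanishes one needs the weighted Poincar\'e inequality of Lemma \ref{l:poincare} to recover $\|z\|_{H^1}$ from $\|\sqrt{\rho_0}z\|_2+\|\nabla z\|_2$.
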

\end{framed}

As a by-product, we obtain the following  answer to  Lions' question (\cite{PLL}, page 34):
\begin{framed}
\begin{cor}\label{cor:drop} Let $\Omega$ be a $\cC^2$ bounded domain of $\R^d$  (or the torus $\T^d$) with $d=2,3.$ 
Assume that  $\rho_0= 1_{D_0}$ for some open subset $D_0$ of $\Omega$ with $\cC^{1,\alpha}$ regularity ($0<\alpha<1$ if $d=2$ and $0<\alpha<1/2$ if $d=3$). 
Then for any divergence free initial velocity $v_0$ in  $H^1_0(\Omega)$ (satisfying \eqref{eq:smalld=3} with $\rho^*=1$ if $d=3$), 
the unique global solution $(\rho,v,\nabla P)$ provided by the above theorems is such that   for all $t\geq0,$
$$\rho(t,\cdot)=1_{D_t} \ \hbox{ with }\ D_t := X(t,D_0),$$ where   $X(t,\cdot)$ stands for the flow of $v,$  that is the unique solution of 
\begin{equation}\label{l1}
 \frac{dX}{dt}= v(t,X), \qquad X|_{t=0}=y,\qquad y\in\Omega.
\end{equation}
Furthermore,  $D_t$ has  $\cC^{1,\alpha}$ regularity with a control of the H\"older
norm in terms of the initial data. 
\end{cor}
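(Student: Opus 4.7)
The plan is to exploit the Lagrangian structure: once the velocity is Lipschitz in space, uniformly in time in a suitable integral sense, the density is simply transported by the flow, and the $C^{1,\alpha}$ regularity of the interface follows from the corresponding regularity of the flow map.

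First, I would extract from Theorems \ref{thm:d=2} and \ref{thm:d=3} that $\nabla v \in L_1(0,T;L_\infty(\Omega))$. Indeed, the theorems give $\nabla^2(\sqrt t\,v) \in L_2(0,T;L_m)$ with $m$ arbitrarily large in 2D and $m=6$ in 3D, so by Sobolev embedding $\sqrt t\,\nabla v \in L_2(0,T;L_\infty)$. The naive Cauchy--Schwarz splitting $\int_0^T\!\|\nabla v\|_\infty dt = \int_0^T (\sqrt t\,\|\nabla v\|_\infty)\,t^{-1/2}dt$ fails by a logarithm, and this is exactly where the logarithmic interpolation inequality announced in the appendix intervenes: it bounds $\|\nabla v\|_\infty$ in terms of the unweighted $\|\nabla^2 v\|_2 \in L_2(0,T)$ and the weighted $\|\sqrt t\,\nabla^2 v\|_m$, yielding $\nabla v\in L_1(0,T;L_\infty)$. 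Consequently the flow $X(t,\cdot)$ of \eqref{l1} is a volume-preserving bi-Lipschitz homeomorphism of $\Omega$, and the Cauchy--Lipschitz transport theory gives $\rho(t,\cdot) = \rho_0\circ X^{-1}(t,\cdot) = 1_{X(t,D_0)} = 1_{D_t}$.

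Next, for the $C^{1,\alpha}$ regularity of $\partial D_t$, I would parametrize $\partial D_0$ locally by a $C^{1,\alpha}$ chart $\gamma_0$, so that $\sigma\mapsto X(t,\gamma_0(\sigma))$ is a parametrization of $\partial D_t$. It suffices to control $\nabla_y X(t,\cdot)$ in $C^\alpha$. Differentiating \eqref{l1},
\begin{equation*}
\partial_t(\nabla_y X)(t,y)=(\nabla v)(t,X(t,y))\,\nabla_y X(t,y),\qquad \nabla_y X|_{t=0}=\mathrm{Id},
\end{equation*}
and a standard Gronwall argument on the $L_\infty$ norm uses Step 1, while a difference-Gronwall on the $C^\alpha$ seminorm gives
\begin{equation*}
[\nabla_y X(t)]_{C^\alpha}\lesssim \exp\!\Bigl(C\!\int_0^t\|\nabla v\|_\infty ds\Bigr)\Bigl(1+\int_0^t\|\nabla v\|_{C^\alpha}\,ds\Bigr).
\end{equation*}
The persistence statement is therefore reduced to $\nabla v\in L_1(0,T;C^\alpha)$, and the range $\alpha<1$ in 2D, $\alpha<1/2$ in 3D reflects precisely the Morrey threshold for the exponent $m$ allowed in the theorems ($m<\infty$ gives $W^{1,m}\hookrightarrow C^{1-2/m}$ in 2D; $m=6$ gives $W^{1,6}(\R^3)\hookrightarrow C^{1/2}$ in 3D).

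The main obstacle is thus to upgrade $\sqrt t\,\nabla v\in L_2(0,T;C^\alpha)$, which follows directly from Morrey, to $\nabla v\in L_1(0,T;C^\alpha)$. This is the same logarithmic loss phenomenon as for $L_\infty$, and I would handle it by the same tool: the logarithmic interpolation inequality of the appendix, applied now to the $C^\alpha$ norm of $\nabla v$ interpolated between the low-regularity uniform-in-time $H^1$ bound on $v$ and the high-regularity weighted $L_2(L_m)$ bound on $\nabla^2 v$. Once this is achieved, the $C^{1,\alpha}$ control of $\gamma_t$ is quantitative and depends only on $\|\gamma_0\|_{C^{1,\alpha}}$, $T$, and the norms of $v$ provided by the theorems, which proves the corollary.
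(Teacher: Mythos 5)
Your overall architecture --- reduce everything to $\nabla v\in L_1(0,T;\cC^{0,\alpha})$, then transport the patch by the resulting $\cC^{1,\alpha}$ flow and run a Gronwall estimate on $[\nabla_y X]_{\cC^{\alpha}}$ --- is exactly the paper's. The gap is in the key analytic step, in both of its occurrences: you start from the $L_2$-in-time endpoint $\sqrt t\,\nabla^2 v\in L_2(0,T;L_m)$, observe that the weight $t^{-1/2}$ then just barely fails to be integrable against it, and propose to repair this with ``the logarithmic interpolation inequality announced in the appendix.'' That inequality is Desjardins' estimate \eqref{d0} for $\bigl(\int\rho|z|^4\,dx\bigr)^{1/2}$; it concerns the density-weighted $L_4$ norm of $v$ and is used in the energy estimate of Proposition \ref{l:H2}. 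It says nothing about $\|\nabla v\|_{\infty}$ or $\|\nabla v\|_{\cC^{0,\alpha}}$, no substitute (Brezis--Gallouet/Brezis--Wainger type) inequality is stated or proved in your argument, and it is not explained how a logarithmic gain in the space norm would cancel the logarithmic divergence of $\int_0^T t^{-1}\,dt$. As written, the step does not close.

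The repair is simpler than what you attempt and needs no logarithm at all. The theorems (equivalently, Lemmas \ref{l:high} and \ref{l:high3}) provide the weighted bounds in $L_p$ in time for the whole range $p\in[2,\infty]$, not only $p=2$: in 2D, $\nabla^2(\sqrt t\,v)\in L_\infty(0,T;L_r)\cap L_2(0,T;L_m)$ for all $r<2$ and $m<\infty$, so interpolation yields $\sqrt t\,\nabla^2 v\in L_{2+\ep}(0,T;L_{1/\ep})$; in 3D one gets $\sqrt t\,\nabla^2 v\in L_{2+\ep}(0,T;L_{6-\ep})$. Since $p=2+\ep>2$, the conjugate exponent satisfies $p'<2$, hence $t^{-1/2}\in L_{p'}(0,T)$, and H\"older in time gives $\nabla v\in L_1(0,T;W^1_q)\hookrightarrow L_1(0,T;\cC^{0,\beta})$ with $\beta$ up to $1$ in 2D and up to $1/2$ in 3D --- precisely the thresholds you correctly identified, but obtained by shifting the time integrability strictly above the critical exponent $2$ rather than by a logarithmic refinement at the endpoint.
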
\end{framed}
\begin{proof}
Assume that $D_0$ corresponds  to the level set $\{f_0=0\}$ of some function 
$f_0:\Omega\to\R$ with $\cC^{1,\alpha}$ regularity. Then  we have  $D_t:=f_t^{-1}(\{0\})$ 
with $f_t:= f\circ X(t,\cdot).$ Fix some $T>0.$ 

In the 2D case,     Theorem \ref{thm:d=2} and interpolation imply that we have
$$\sqrt t\nabla^2 v\in L_{2+\ep}(0,T;L_{1/\ep}(\Omega))\ \hbox{ for all small enough }\ep>0.$$ By Sobolev embedding with 
respect to the space variable, and H\"older inequality with respect to the time variable, one can conclude 
 that    $\nabla v$ is in $L^1(0,T;\cC^{0,\beta})$ for all $\beta\in(0,1).$
Consequently the flow $X(t,\cdot)$  is in $\cC^{1,\beta}$ for all $\beta\in(0,1),$
which implies that $f_t$ is in $\cC^{1,\alpha}$ provided that $\alpha<1.$

Similarly, in the 3D case, Theorem  \ref{thm:d=3}
ensures that $\sqrt t\nabla^2 v$ is in $L_{2+\ep}(0,T;L_{6-\ep}(\Omega)),$
and thus  $\nabla v$ is in $L_{1}(0,T;W^1_r)$ for all $r<6.$ 
This implies that  the flow $X(t,\cdot)$ is in $\cC^{1,\beta}$  for all $\beta\in (0,1/2),$
and thus $f_t$ is in $\cC^{1,\alpha}$ if $\alpha<1/2.$ \end{proof}
\begin{rmk}\label{r:local} In the 3D case,  there exists a  constant $c=c(\Omega)$ such that if Condition \eqref{eq:smalld=3} is not satisfied, 
then Theorem \ref{thm:d=3} and Corollary \ref{cor:drop} hold true on the time interval $[0,T]$ with 
 $\displaystyle T:=\Bigl(\frac{\mu}{\rho^*}\Bigr)^7\frac{c\rho^*}{\|\sqrt\rho_0 v_0\|_2^2\|\nabla v_0\|_2^6}\cdotp$
\end{rmk}

\begin{rmk} The time regularity issue is rather subtile. 
In fact, unless the density is bounded away from $0,$
we do not have $v$ in $\cC(\R_+;L_2).$ 
At the same time, as the kinetic energy satisfies the energy balance \eqref{eq:ene}, 
we do have  $\sqrt{\rho} v \in \cC(\R_+;L_2).$
It follows that, whenever the initial kinetic energy vanishes (that is $\int_{\T^d} \rho_0 |v_0|^2 \,dx =0$),
the unique solution provided by Theorems \ref{thm:d=2} and \ref{thm:d=3} is  $v \equiv 0$ for $t>0,$ \emph{even
though $v_0$ need not be $0$}. This is consistent with physics, 
and with the time regularity properties exhibited above.
\end{rmk}

Let us give some insight on the proof of Theorems \ref{thm:d=2} and \ref{thm:d=3}. 
 As in the constant density case,  in order to get  uniqueness, one has to  propagate enough regularity  of the velocity. 
 In the present situation, starting from $H^1$ regularity for the velocity and implementing   a basic energy method on the momentum
 equation,   we will succeed in extracting  some parabolic  smoothing effect \emph{even if the density is rough and vanishes}.
  At the end, we will  have  a control on $v_t,\nabla^2v,\nabla P$ in $L_2(0,T\times\T^d)$  in terms of the initial data. 
 Let us make it more precise~:  after testing  $(INS)_2$ by $v_t,$ it appears that the only troublemaker is the convection term ${\rho} v\cdot\nabla v.$ 
In the case $\inf\rho_0>0,$ the usual approach in dimension $d=2$  (that goes back to \cite{LS}) 
is to combine H\"older inequality  and the following  special case of  Gagliardo-Nirenberg inequalities, first pointed out by O. Ladyzhenskaya in \cite{Lad}:
 \begin{equation}\label{eq:lad}
 \|z\|_4^2\leq C\|z\|_2 \|\nabla z\|_2,
 \end{equation}
to eventually  get for all $\ep>0,$
$$\begin{aligned}
\|{\rho} v\cdot\nabla v\|_2&\leq C\rho^*\|v\|_2^{\frac12}\|\nabla v\|_2\|\nabla^2 v\|_2^{1/2}\\
&\leq \frac C{\varepsilon^{1/3}}(\rho^*)^2\|v\|_2^2\|\nabla v\|_2^2\|\nabla v\|_2^2+\varepsilon \|\nabla^2 v\|_{2}^2.
\end{aligned}$$ 
The last  term may be `absorbed' if $\ep$ is chosen small enough, 
and  the first one may be handled by Gronwall inequality.  {}From it, one gets a global-in-time control 
on $\|v\|_{H^1},$ provided one can bound 
$$\int_0^t\|v\|_2^2\|\nabla v\|_2^2\,dt$$
in terms of the data.   If  \emph{$\rho$ is bounded away from $0$},
then this is a consequence of the basic energy balance  \eqref{eq:uL2}, 
 as   $\|v\|_2\leq (\inf\rho)^{-1/2}\|\sqrt{\rho}\, v\|_2.$
\smallbreak
To handle the case where  we just have $\rho_0\geq0,$  we shall  take advantage of 
the following Desjardins' interpolation inequality (proved in  \cite{Des-CPDE} and in the appendix):  
\begin{equation}\label{eq:deslog}
\biggl(\int \rho|v|^4\,dx\biggr)^{\frac12}\leq C \|\sqrt\rho v\|_{2}\|\nabla v\|_{2}\log^{\frac12}\biggl(e+\frac{\|\rho-M\|^2_{2}}{M^2}+\frac{\rho^*\|\nabla v\|^2_{2}}{\|\sqrt \rho v\|^2_{2}}\biggr)\cdotp
\end{equation}
Note that \eqref{eq:deslog} has just an additional logarithmic term compared to Ladyzhenskaya inequality,
hence using a suitable generalized Gronwall inequality gives us  a chance to get a global control on the solution for all time.  Note also that in (\ref{eq:deslog}) the log correction involves 
just $\|\nabla v\|_2$, not higher norms of $v$.

The three-dimensional  case turns out to be more direct,  if we  assume either smallness 
of $v,$ or  restrict to local-in-time results (the global existence issue for large data being
open, as in the constant density case).

\smallbreak 

Looking back at what we obtained so far, we see that  we  have just $v \in L_2(0,T;H^2),$
hence  we miss   (by a little in dimension $2$ and half a derivative in dimension $3$) the property that 
\begin{equation}\label{eq:Dv}\nabla v\ \hbox{ is in }\  L_1(0,T;L_\infty(\Omega)),\end{equation}
  which, in most fluid mechanics models,   is  (almost) a necessary condition for uniqueness, and   is also strongly 
connected to the existence and uniqueness of a Lipschitz flow for the velocity
(and thus to the possibility of   reformulate System $(INS)$ in Lagrangian coordinates). 
At this stage, the idea is \emph{to shift  integrability from time  to  space variables}, that is
\begin{equation}\label{eq:shift}
 v \in L_2(0,T;H^2) \rightsquigarrow v \in L_{2-\sigma}(0,T;W^2_{2+\delta})\quad\hbox{for suitable } \sigma,\delta>0.
\end{equation}
Indeed, it is clear that if \eqref{eq:shift} holds true  (with  $\delta>1$ in dimension $3$) 
then using  Sobolev embedding gives \eqref{eq:Dv}.    
 Getting \eqref{eq:shift}  will follow from   \emph{time-weighted} estimates, 
a  technique originating  from the theory of parabolic equations that  has  been effectively applied to $(INS)$  recently, in \cite{Li,PZZ}.
In fact, we  prove by means of a standard energy method that 
\begin{equation}\label{eq:vt}
 \sup _{0\leq t \leq T} \biggl(t \int_{\T^d} \rho  |v_t|^2 \,dx\biggr) + \int_{0}^{T} \biggl(t \int_{\T^d} |\nabla v_t|^2 \,dx\biggr)dt \leq C_{0,T}
\end{equation}
with $C_{0,T}$  depending only on $\rho^*,$ $\|\sqrt{\rho_0} v_0\|_2,$ $\|\nabla v_0\|_2$ and $T.$ 
\medbreak
Now, one may  bootstrap the   regularity provided by \eqref{eq:vt}  by rewriting the velocity equation of $(INS)$  in the following elliptic form, treating the time variable
as a parameter: \begin{equation}\label{s1}
 \begin{array}{lcl}
  -\Delta \sqrt t\, v + \nabla \sqrt t\, P =-\rho \sqrt t\, v_t - \sqrt t \,\rho v \cdot \nabla v & \mbox{in} & \Omega,\\[1ex]
\div \sqrt t\, v =0 & \mbox{in} & \Omega.
 \end{array}
\end{equation}
In the 2D case, taking advantage of the classical maximal regularity properties of the Stokes system, as in \cite{DM-arma} (or in 
 \cite{M-QM,MZ-jde} in the context of the compressible Stokes system), 
 we readily get 
 for all $2\leq p\leq\infty$  and   $\varepsilon >0$,
\begin{equation}\label{s2a}
 \nabla^2 \sqrt t\, v, \nabla \sqrt t  P \in    L_p(0,T;L_{p^*-\varepsilon}),\qquad
 p^*:=\frac{2p}{p-2}\cdotp
\end{equation}
Similarly, in the 3D case, we end up with 
\begin{equation}\label{s3a}
 \nabla^2 \sqrt t\, v, \nabla \sqrt t\, P \in    L_p(0,T;L_r)\quad\hbox{with}\quad
 2\leq r\leq \frac{6p}{3p-4}\cdotp
\end{equation}
In both cases, this implies that $\sqrt t \nabla  v$ is in $L_3(0,T;L_\infty),$ and thus, 
by H\"older inequality,
$$
\int_0^T\|\nabla v\|_\infty\,dt=\int_0^T \|\sqrt t\nabla v\|_\infty\,\frac{dt}{\sqrt t}
\leq C T^{1/6} \|\sqrt t\nabla v\|_{L_3(0,T;L_\infty)}.
$$

Finally, having \eqref{eq:Dv} enables   us to reformulate System $(INS)$  in  Lagrangian coordinates (see the beginning of Section \ref{s:uniqueness})
without requiring more regularity on the data than \eqref{eq:data}. This is the key to uniqueness 
 (the direct method  based on stability estimates for  $(INS)$ is bound to fail owing to the hyperbolicity of the mass equation: 
 one lose one derivative, and one cannot afford any loss as $\rho$ is not regular enough). 
As already pointed out in  \cite{DM-cpam,DM-arma}, 
this loss does not occur   if one  looks at the difference between 
 two solutions of $(INS)$ originating from the same initial data, \emph{in Lagrangian coordinates.}
 Estimating that difference may  be done  by means of basic energy arguments.  
The only difficulty is that the divergence is no longer $0$
and one thus first has to solve a `twisted' divergence equation to remove
the non-divergence free part.
Then, ending up with a Gronwall lemma, we get 
uniqueness on a small enough time interval, and arguing by induction yields uniqueness on the existence time interval. 



\section{The proof of existence in Theorems \ref{thm:d=2} and \ref{thm:d=3}}\label{s:existence}

This section mainly concerns the  proof of  a priori estimates for smooth solutions of $(INS).$
Those estimates will eventually enable us to prove  the   existence part of 
 Theorems \ref{thm:d=2} and \ref{thm:d=3} for any data satisfying \eqref{id}.

For notational simplicity, we shall assume throughout that $\mu=1$. This is not restrictive since $(\rho,v,P)(t,\cdot)$ satisfies $(INS)$ with viscosity 
$\mu$ if and only if $(\rho,\mu v,\mu^2P)(\mu t,\cdot)$ satisfies $(INS)$ with viscosity $1.$
Finally, for expository purpose, we shall focus on the torus case. 
As the proof follows from energy arguments, functional embeddings and 
 a Poincar\'e inequality which is  the standard one in the bounded domain case, 
 and is not obvious only in the torus case (see Lemma \ref{l:poincare}),  the case of bounded domain may be treated along the same lines.

\subsection{The persistence of Sobolev regularity}

In the 2D case, the first step  is to prove the following a priori estimate.
\begin{prop}\label{l:H2} Let $(\rho,v)$ be a smooth solution to   System $(INS)$ on $[0,T)\times\T^2,$
satisfying $0\leq\rho\leq\rho^*.$ There exists a constant  $C_0$ depending only on $M,$ $\|\rho_0\|_2,$ $\|\sqrt\rho_0 v_0\|_2$ 
and   $\rho^*$ so that for all $t\in[0,T),$ we have
\begin{equation}\label{b8}
\|\nabla v(t)\|^2_2 + \frac1{2}\int_0^t\Bigl(\|\sqrt\rho v_t\|_2^2 +\!\frac1{\rho^*}\|\nabla^2 v,\nabla P\|_2^2\Bigr)d\tau \leq 
\bigl(e+\|\nabla v_0\|_2^2\bigr)^{\exp\{C_0\|\sqrt\rho_0\,v_0\|_{L_2}^2\}}-e.
\end{equation}
Furthermore, for all $p\in[1,\infty)$ and $t\in[0,T),$ we have
 \begin{equation}\label{eq:vt2}
\|v(t)\|_{p}\leq \frac1M \biggl|\int_{\T^2} (\rho_0 v_0)(x)\,dx\biggr| + C_p\biggl(1+\frac{\|M-\rho_0\|_2}{M}\biggr)\|\nabla v(t)\|_{2}.
\end{equation}
\end{prop}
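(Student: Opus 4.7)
The natural starting point is to test the momentum equation against $v_t$. Since $\div v=0$ is preserved in time, $\div v_t=0$ kills the pressure contribution, and integration by parts on the Laplacian gives the basic identity
\[
\|\sqrt\rho\,v_t\|_2^2+\tfrac12\tfrac{d}{dt}\|\nabla v\|_2^2=-\int_{\T^2}\rho\,v\cdot\nabla v\cdot v_t\,dx.
\]
By Cauchy-Schwarz and Young, controlling the convective quantity $A:=\|\sqrt\rho\,v\cdot\nabla v\|_2$ is therefore the whole game, since half of $\|\sqrt\rho v_t\|_2^2$ can be absorbed on the left.

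To bound $A^2=\int\rho|v|^2|\nabla v|^2$, I would combine three ingredients in sequence: the H\"older splitting $A^2\leq(\rho^*)^{1/2}(\int\rho|v|^4)^{1/2}\|\nabla v\|_4^2$, Desjardins' logarithmic inequality \eqref{eq:deslog} for the first factor, and the 2D Ladyzhenskaya inequality $\|\nabla v\|_4^2\lesssim\|\nabla v\|_2\|\nabla^2 v\|_2$ for the second. The missing $\|\nabla^2 v\|_2$ is then read off the stationary Stokes problem $-\Delta v+\nabla P=-\rho v_t-\rho v\cdot\nabla v$, $\div v=0$: maximal regularity gives $\|\nabla^2 v\|_2+\|\nabla P\|_2\lesssim(\rho^*)^{1/2}(\|\sqrt\rho v_t\|_2+A)$, and a careful Young lets us absorb both $\|\sqrt\rho v_t\|_2$ and $A$ on the left. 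The output is the key differential inequality
\[
\tfrac{d}{dt}\|\nabla v\|_2^2+\tfrac12\|\sqrt\rho v_t\|_2^2+\tfrac1{C\rho^*}\|\nabla^2 v,\nabla P\|_2^2\leq C\rho^{*2}\|\sqrt\rho v\|_2^2\|\nabla v\|_2^4\,L(t),
\]
with $L(t):=\log\bigl(e+\|\rho_0-M\|_2^2/M^2+\rho^*\|\nabla v\|_2^2/\|\sqrt\rho v\|_2^2\bigr)$.

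The closing Gronwall step is the main obstacle, because $L$ contains $E(t):=\|\sqrt\rho v\|_2^2$ in the denominator and $E$ may vanish along the flow. My plan is to couple to the energy identity $\tfrac12\tfrac{d}{dt}E+\|\nabla v\|_2^2=0$, which gives $E(t)\leq E_0$ and, after multiplying by $E$ and integrating, also $\int_0^tE\|\nabla v\|_2^2\,ds\leq E_0^2/4$. Writing $F:=\|\nabla v\|_2^2$ and $\Phi:=\log(e+F)$, the inequality collapses (using $F^2/(e+F)\leq F$) to $\Phi'\lesssim\rho^{*2}EFL$. Now split $L\leq\log(e+C_2)+\log(e+\rho^*F)+\log^+(1/E)$ --- the last term coming from $e+X/E\leq(e+X)/E$ when $E\leq1$ --- and then $\log(e+\rho^*F)\leq\Phi+\log(e+\rho^*)$. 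Three of the four resulting pieces integrate explicitly against $EF$: the constant factors against $\int_0^tEF\,ds\leq E_0^2/4$, and $EF\log^+(1/E)\leq F/e$ (because $E\log(1/E)\leq1/e$ on $(0,1]$) against $\int_0^tF\,ds\leq E_0/2$. The only remaining piece is precisely $\rho^{*2}EF\,\Phi$, so a standard linear Gronwall produces $\Phi(t)\leq(\Phi_0+C')\exp(C\rho^{*2}E_0^2/2)$ with $C'$ depending only on the allowed data; rearranging yields the double-exponential bound \eqref{b8}. The dissipation terms on the left-hand side of \eqref{b8} then follow by integrating the differential inequality in time, using the now-established bound on $F$ to control $\int K^2\,ds$.

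The second estimate \eqref{eq:vt2} is of an elementary nature. Decomposing $v=\bar v+(v-\bar v)$ with $\bar v:=\int_{\T^2}v\,dx$, the identity $\int(M-\rho)\,dx=0$ combined with momentum conservation \eqref{eq:m} gives $M\bar v-\int\rho_0v_0\,dx=\int(M-\rho)(v-\bar v)\,dx$, whence by Cauchy-Schwarz and Poincar\'e $|M\bar v-\int\rho_0v_0\,dx|\leq C\|M-\rho_0\|_2\|\nabla v\|_2$; here I use that $\|\rho(t)-M\|_2=\|\rho_0-M\|_2$ because the transport equation preserves every Lebesgue norm of $\rho$. The two-dimensional Sobolev-Poincar\'e embedding $\|v-\bar v\|_p\leq C_p\|\nabla v\|_2$, valid for every finite $p$, then yields \eqref{eq:vt2} after dividing by $M$ and summing the two contributions.
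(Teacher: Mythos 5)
Your proposal is correct, and the overall architecture (testing the momentum equation by $v_t$, reading $\|\nabla^2 v\|_2+\|\nabla P\|_2$ off the stationary Stokes problem, and bounding the convective term by H\"older, Desjardins' logarithmic inequality and Ladyzhenskaya) is exactly the paper's. The one place where you genuinely diverge is the closing Gronwall step, which is also the delicate point where vacuum enters. The paper first observes that $z\mapsto z\log(e+a+b/z)$ is increasing and invokes the energy decay $\|\sqrt\rho\,v\|_2\leq\|\sqrt{\rho_0}\,v_0\|_2$ to freeze the kinetic energy at its initial value \emph{both} as a prefactor and inside the logarithm (their inequality \eqref{a4}); this reduces everything to the clean Osgood inequality $X'\leq fX\log(e+X)$ with $f=C_0\|\nabla v\|_2^2$ integrable by the energy balance, giving \eqref{b8} directly. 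You instead keep the time-dependent energy $E(t)$ and tame the possibly vanishing denominator by splitting the logarithm and using $E\log^+(1/E)\leq 1/e$ together with the two integrals $\int_0^t EF\,ds\leq E_0^2/4$ and $\int_0^t F\,ds\leq E_0/2$ extracted from the energy identity. This works (the degenerate case $E(t)=0$ is harmless since then $\int\rho|v|^4\,dx=0$ and the convective term vanishes anyway), and it has the mild virtue of not requiring the monotonicity observation; the cost is that your final constant comes out as $(1+C')\exp(C\rho^{*2}E_0^2/4)$ rather than $\exp(C_0E_0)$, which matches the statement only after reabsorbing the extra multiplicative factor into $C_0$ --- legitimate here because $C_0$ is allowed to depend on $\|\sqrt{\rho_0}\,v_0\|_2$. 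The proof of \eqref{eq:vt2} is identical to the paper's.
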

\begin{proof} It is based on  the following 
 improvement of the Ladyzhenskaya inequality   that has been  pointed out  by B. Desjardins in  \cite{Des-CPDE} 
(see also the  Appendix of the present paper).
\begin{lem}\label{DLest}
 There exists a constant $C$ so that for all $z \in H^1(\T^2)$ and function $\rho\in L_\infty(\T^2)$ 
 with $0\leq\rho\leq\rho^*,$ we have 
\begin{equation}\label{d0}
\biggl(\int \rho z^4\,dx\biggr)^{\frac12}\leq C \|\sqrt\rho z\|_{2}\|\nabla z\|_{2}\log^{\frac12}\biggl(e+\frac{\|\rho-M\|^2_{2}}{M^2}+\frac{\rho^*\|\nabla z\|^2_{2}}{\|\sqrt \rho z\|^2_{2}}\biggr)\cdotp\end{equation}
\end{lem}
Now, testing  the momentum equation of  $(INS)$ by $v_t$ yields:
$$
\begin{aligned}
 \frac 12 \frac{d}{dt} \int_{\T^2} |\nabla v|^2 dx + \int_{\T^2} \rho |v_t|^2 \,dx &=-\int_{\T^2} (\rho v \cdot \nabla v)\cdot v_t\, dx\\ &\leq 
\frac12  \int_{\T^2} \rho |v_t|^2\,dx + \frac12\int_{\T^2} \rho |v\cdot \nabla v |^2\, dx.
\end{aligned}
$$
Hence 
\begin{equation}\label{b1}
\frac d{dt} \int_{\T^2} |\nabla v|^2 dx + \int_{\T^2} \rho |v_t|^2 \,dx
\leq \int_{\T^2} \rho |v\cdot \nabla v |^2\, dx.
\end{equation}
In order to estimate  the second derivatives of $v$ and the gradient of the pressure,   we look at 
$(INS)_2$ in the form
\begin{equation}\label{b3}
 \begin{array}{lcl}
  -\Delta v + \nabla P= - \rho v_t - \rho v \cdot \nabla v & \mbox{in} & (0,T)\times\T^2,\\[1ex]
\div v=0 &\mbox{in} & (0,T)\times\T^2.
 \end{array}
\end{equation}
Then, from the  Helmholtz decomposition on the torus, we get
$$
 \|\nabla^2 v\|_{2}^2+\|\nabla P\|^2_2=\|\rho(v_t+v\cdot\nabla v)\|_2^2
 \leq \frac{\rho^*}2\biggl(\int_{\T^2} \rho |v_t|^2\, dx + \int_{\T^2} \rho |v\cdot\nabla v|^2\, dx\biggr)\cdotp
 $$
 Putting together  with \eqref{b1} thus yields
 \begin{equation}\label{b5}
\frac d{dt} \int_{\T^2} |\nabla v|^2 dx + \frac12\int_{\T^2} \rho |v_t|^2 \,dx
+\frac1{\rho^*}\bigl( \|\nabla^2 v\|_{2}^2+\|\nabla P\|^2_2\bigr)
\leq \frac32\int_{\T^2} \rho |v\cdot \nabla v |^2\, dx.
\end{equation}
In order to bound the last term,  we write that, thanks to \eqref{eq:lad},
\begin{equation}\label{b7}
 \int_{\T^2} \rho |v \cdot\nabla v |^2\, dx \leq \|\sqrt{\rho} |v|^2\|_2 \|\sqrt{\rho} |\nabla v|^2\|_2
 \leq C\sqrt{\rho^*}  \|\sqrt{\rho} |v|^2\|_2 \|\nabla v\|_2\|\nabla^2 v\|_2.
 \end{equation}
 To bound the term $\|\sqrt{\rho} |v|^2\|_2$ despite the fact that $\rho$ vanish, it suffices
 to   combine Inequality \eqref{d0} (after observing that the function $z\mapsto z\log(e+1/z)$ is increasing),  
 the energy balance \eqref{eq:ene} and \eqref{eq:M},  to get
\begin{equation}\label{a4}
  \|\sqrt{\rho}\,|v|^2\|^2_2 \leq C\|\sqrt{\rho_0}\,v_0\|_2^2 \|\nabla v\|_2^2 \log\biggl(e+\frac{\|\rho_0-M\|_2^2}{M^2}+\rho^*\frac{\|\nabla v\|^2_2}{\|\sqrt{\rho_0}\,v_0\|_2^2}\biggr)\cdotp
\end{equation}
Reverting to \eqref{b7}, we end up with 
$$
\begin{aligned}
 \int_{\T^2} \rho |v \cdot\nabla v |^2 dx  &\leq 
 \frac1{3\rho^*}\|\nabla^2 v\|_2^2 +C(\rho^*)^2   \|\sqrt{\rho} |v|^2\|_2^2 \|\nabla v\|_2^2 \\
&\leq\frac1{3\rho^*}\|\nabla^2 v\|_2^2 + C(\rho^*)^{2}
\|\sqrt{\rho_0}\,v_0\|_2^2 \|\nabla v\|_2^4 \log\biggl(e\!+\!\frac{\|\rho_0-M\|_2^2}{M^2}\!+\!\rho^*\frac{\|\nabla v\|^2_2}{\|\sqrt{\rho_0}\,v_0\|_2^2}\biggr)\cdotp
\end{aligned}
$$
Then combining  with \eqref{b5} yields
$$ \frac{d}{dt} \int_{\T^2} |\nabla v|^2 dx + \frac1{2}\int_{\T^2} \Bigl(\rho |v_t|^2 + \frac1{\rho^*}\bigl(|\nabla^2 v|^2+|\nabla P|^2\bigr)\Bigr)dx \leq  C_0 \|\nabla v\|^2_2 \log (e +\|\nabla v\|^2_2) \|\nabla v\|^2_2
$$
with $C_0$ depending only on $\rho^*,$ $\|\sqrt{\rho_0}\,v_0\|_2,$ $M$ and $\|\rho_0-M\|_2.$ 
\medbreak
Denoting $f(t):=C_0\|\nabla v(t)\|_{L_2}^2$ and
$$
X(t):=  \int_{\T^2} |\nabla v(t)|^2\, dx +   \frac1{2}\int_{\T^2} \Bigl(\rho |v_t|^2 + \frac1{\rho^*}\bigl(|\nabla^2 v|^2+|\nabla P|^2\bigr)\Bigr)dx,
$$
the above inequality rewrites 
$$
\frac d{dt} X\leq f X\log(e+X),
$$
from which we get, for all $t\geq0,$
$$ e+X(t)\leq\bigl(e+X(0)\bigr)^{\exp\int_0^tf(\tau)\,d\tau}.$$
Hence, by virtue of \eqref{eq:ene}, we have \eqref{b8}.
\medbreak
In order to prove \eqref{eq:vt2}, we observe that   for all $p \in [1,\infty),$
 denoting by $\bar v(t)$ the average of $v(t)$ on $\T^2,$ we have  by Sobolev embedding,   
\begin{equation}\label{eq:barv}\|v(t)\|_p\leq |\bar v(t)|+\|v(t)-\bar v(t)\|_p\leq  |\bar v(t)|+C_p\|\nabla v(t)\|_2.\end{equation}
Now, from the mass conservation \eqref{eq:M}, we get 
$$M\,\bar v(t) =\int_{\T^2} (\rho v)(t,x)\,dx + \int_{\T^2}(M-\rho(t,x))(v(t,x)-\bar v(t))\,dx.$$
Hence, using \eqref{eq:m},  the conservation of the $L^2$ norm 
of the density, then Cauchy-Schwarz and Poincar\'e inequalities, 
$$ M\,|\bar v(t)| \leq \biggl|\int_{\T^2} (\rho_0 v_0)(x)\,dx\biggr| + \|M-\rho_0\|_{2}\|\nabla v(t)\|_2.$$
Plugging that latter inequality in \eqref{eq:barv} yields \eqref{eq:vt2}.
\end{proof} 


\bigbreak
The adaptation of Proposition \ref{l:H2} to the 3D case reads as follows. 
 \begin{prop}\label{l:H3}
 Let $(\rho,v)$ satisfy System $(INS)$ with $0\leq\rho_0\leq\rho^*$ 
 and divergence free $v_0$ in $H^1(\T^3).$  
 There exists a universal constant $c$ such that if 
 \begin{equation}\label{eq:smalld=3a}
(\rho^*)^{\frac32}\|\sqrt{\rho_0}\, v_0\|_{2}\|\nabla v_0\|_2\leq c
\end{equation}
then for all $t\in[0,T),$ we have 
\begin{equation}\label{eq:H3}
 \|\nabla v(t)\|^2_2 + \frac12\int_0^t\|\sqrt\rho\, v_t\|_2^2 \,d\tau
+\frac1{2\rho^*}\int_0^t\bigl( \|\nabla^2 v\|_{2}^2+\|\nabla P\|^2_2\bigr)\,d\tau\leq 2 \|\nabla v_0\|_{2}^2.
\end{equation}
Furthermore, if \eqref{eq:smalld=3a} is not satisfied then   \eqref{eq:H3}  holds true on $[0,T],$ if
\begin{equation}\label{eq:T}
T\leq\frac{c}{(\rho^*)^6   \|\sqrt\rho_0 v_0\|_2^{2}\|\nabla v_0\|_2^6}\cdotp
\end{equation}
Finally, Inequality \eqref{eq:vt2} holds true for all $p\in[1,6].$
 \end{prop}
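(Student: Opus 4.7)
The plan is to mimic the proof of Proposition~\ref{l:H2}, the only difference being that in three dimensions the Desjardins--Ladyzhenskaya inequality \eqref{d0} must be replaced by a plain Gagliardo--Nirenberg/Sobolev estimate. Since the latter provides no logarithmic margin, the Gronwall-type closure will only succeed either under the smallness condition \eqref{eq:smalld=3a}, or locally in time.

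The same computations as in \eqref{b1}--\eqref{b5} (testing the momentum equation by $v_t$, then invoking the Helmholtz decomposition and maximal regularity for the stationary Stokes system) lead to
\begin{equation*}
\frac{d}{dt}\|\nabla v\|_2^2 + \int_{\T^3}\rho|v_t|^2\,dx + \frac{1}{2\rho^*}\bigl(\|\nabla^2 v\|_2^2 + \|\nabla P\|_2^2\bigr) \leq 2\int_{\T^3}\rho|v\cdot\nabla v|^2\,dx.
\end{equation*}
For the right-hand side, I would combine H\"older with exponents $(3,6)$, the interpolation $\|\sqrt\rho\, v\|_3^2 \leq \|\sqrt\rho\, v\|_2\|\sqrt\rho\, v\|_6$, and the 3D Sobolev embeddings $\|\sqrt\rho\, v\|_6 \leq \sqrt{\rho^*}\|v\|_6 \lesssim \sqrt{\rho^*}\|\nabla v\|_2$ together with $\|\nabla v\|_6 \lesssim \|\nabla^2 v\|_2$, to obtain
\begin{equation*}
\int_{\T^3}\rho|v\cdot\nabla v|^2\,dx \leq C\sqrt{\rho^*}\,\|\sqrt\rho\, v\|_2\|\nabla v\|_2\|\nabla^2 v\|_2^2,
\end{equation*}
and then use the basic energy identity \eqref{eq:ene} to replace $\|\sqrt\rho\, v\|_2$ by $\|\sqrt{\rho_0}\,v_0\|_2$.

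Two scenarios then arise. Under \eqref{eq:smalld=3a} with $c$ small enough, the coefficient $2C\sqrt{\rho^*}\|\sqrt{\rho_0}\,v_0\|_2\|\nabla v\|_2$ stays strictly below $1/(4\rho^*)$ as long as $\|\nabla v\|_2^2\leq 2\|\nabla v_0\|_2^2$. A standard continuity argument on $T^*:=\sup\{t<T: \|\nabla v(s)\|_2^2\leq 2\|\nabla v_0\|_2^2 \text{ for all } s\leq t\}$ then shows that on $[0,T^*]$ the nonlinear term can be absorbed into the $\|\nabla^2 v\|_2^2$ term and $\|\nabla v\|_2$ is in fact non-increasing; so $T^*=T$ and \eqref{eq:H3} follows. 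If \eqref{eq:smalld=3a} is violated, I would instead apply Young's inequality to trade the last factor of $\|\nabla^2 v\|_2$ for a higher power of $\|\nabla v\|_2$ alone, producing an autonomous ODE of the form $y'\leq Ky^n$ for $y:=\|\nabla v\|_2^2$ with coefficients depending polynomially on $\rho^*$ and $\|\sqrt{\rho_0}\,v_0\|_2$; integrating until $y$ doubles gives the quantitative lower bound \eqref{eq:T} on the existence time.

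The main technical obstacle is the bookkeeping in this last step: the interpolation exponent and the Young splitting must be tuned so that the resulting ODE integrates to precisely the bound \eqref{eq:T}, with the prescribed powers of $\rho^*$, $\|\sqrt{\rho_0}\,v_0\|_2$ and $\|\nabla v_0\|_2$. Finally, Inequality \eqref{eq:vt2} transposes from the 2D proof verbatim, the sole change being that on $\T^3$ the Sobolev embedding $\|v-\bar v\|_p\lesssim\|\nabla v\|_2$ is available only for $p\in[1,6]$, which is exactly the range stated.
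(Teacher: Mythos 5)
Your treatment of the global small-data case is correct but follows a genuinely different route from the paper. Starting from \eqref{b5}, you estimate the convection term by H\"older with exponents $(3/2,3)$ and interpolation,
\[
\int_{\T^3}\rho|v\cdot\nabla v|^2\,dx\leq \|\sqrt\rho\,v\|_3^2\,\|\nabla v\|_6^2\leq C\sqrt{\rho^*}\,\|\sqrt\rho\,v\|_2\,\|\nabla v\|_2\,\|\nabla^2 v\|_2^2,
\]
and absorb the whole right-hand side into $\frac1{\rho^*}\|\nabla^2 v\|_2^2$ by a continuity argument on the set where $\|\nabla v\|_2^2\le 2\|\nabla v_0\|_2^2$; the hypothesis \eqref{eq:smalld=3a} is exactly what makes the prefactor small enough. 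The paper instead uses H\"older in $L_4\times L_4$ together with $\|\nabla v\|_4^2\lesssim\|\nabla v\|_2^{1/2}\|\nabla^2 v\|_2^{3/2}$ and Young's inequality, arriving at $X'\le fX^3$ with $f=C(\rho^*)^6\|\sqrt\rho\,v\|_2^2\|\nabla v\|_2^2$, which it integrates explicitly, the smallness entering through $\int_0^\infty f\,dt\le C(\rho^*)^6\|\sqrt{\rho_0}v_0\|_2^4$ from the energy balance. For the first assertion your absorption argument is, if anything, cleaner. (Both write-ups tacitly use $\|v\|_6\lesssim\|\nabla v\|_2$ on $\T^3$, which requires controlling the mean of $v$ as in \eqref{eq:vt2}; this is a shared, repairable imprecision.)

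The gap is in the local-in-time statement \eqref{eq:T}. In your nonlinear estimate the factor $\|\nabla^2 v\|_2$ appears to the \emph{full} power $2$, so there is no room for the Young splitting you propose: one cannot trade $\|\nabla^2 v\|_2^2$ for $\varepsilon\|\nabla^2 v\|_2^2$ plus a power of $\|\nabla v\|_2$, and the prefactor $C\sqrt{\rho^*}\|\sqrt{\rho_0}v_0\|_2\|\nabla v\|_2$ contains no factor that becomes small as $T\to0$; for large data it simply cannot be absorbed, however short the time interval. This is not a bookkeeping issue but a structural one: to get \eqref{eq:T} you must revert to a splitting that leaves $\|\nabla^2 v\|_2$ with exponent strictly below $2$ --- e.g. the paper's $L_4\times L_4$ H\"older, which produces $\|\nabla^2 v\|_2^{3/2}$ and hence, after Young with exponents $(4/3,4)$, the inequality $X'\le C(\rho^*)^6\|\sqrt\rho\,v\|_2^2\|\nabla v\|_2^8\le fX^3$. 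The bound \eqref{eq:T} then follows from $\int_0^T f\,dt\le C(\rho^*)^6\|\sqrt{\rho_0}v_0\|_2^2\,T\,\sup_{[0,T]}X$ and a bootstrap, which is precisely where the powers $(\rho^*)^6\|\sqrt{\rho_0}v_0\|_2^2\|\nabla v_0\|_2^6$ come from. The final claim about \eqref{eq:vt2} for $p\in[1,6]$ is handled correctly.
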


\begin{proof} 
Compared to the 2D case,  the only difference is when  bounding the r.h.s. of \eqref{b5}: we write that
 owing to the H\"older inequality and Sobolev embedding $\dot H^1(\T^3)\hookrightarrow L_6(\T^3),$
  $$\begin{aligned} \int_{\T^3}\rho|v\cdot\nabla v|^2\,dx
 &\leq (\rho^*)^{1/2} \|\rho^{1/4} v\|_4^2\|\nabla v\|_4^2\\
 &\leq   (\rho^*)^{3/4}  \|\sqrt{\rho}\,v\|_2^{1/2}\|v\|_6^{3/2}
 \|\nabla v\|_2^{1/2} \|\nabla v\|_6^{3/2}\\
 &\leq   C (\rho^*)^{3/4}  \|\sqrt\rho\, v\|_2^{1/2}\|\nabla v\|_2^{3/2}\|\nabla v\|_2^{1/2} \|\nabla^2v\|_2^{3/2}\\
&\leq\frac1{3\rho^*}\|\nabla^2 v\|_2^2+C(\rho^*)^6   \|\sqrt\rho\, v\|_2^{2}\|\nabla v\|_2^{8}.
 \end{aligned} $$
 Therefore, using \eqref{b5},  we see that 
 $
 \frac d{dt} X\leq f X^3$
 with
 $$\begin{aligned} &X(t):=  \|\nabla v(t)\|^2_2 + \frac12\int_0^t\|\sqrt\rho\, v_t\|_2^2 \,d\tau
+\frac1{2\rho^*}\int_0^t\bigl( \|\nabla^2 v\|_{2}^2+\|\nabla P\|^2_2\bigr)\,d\tau\\
 \hbox{and }\  &f(t):= C(\rho^*)^6   \|\sqrt\rho\, v\|_2^{2}\|\nabla v\|_2^2.\end{aligned}$$
Hence, whenever $T$ satisfies
$ 2X^2(0)\int_0^Tf(t)\,dt<1,$
we have
$$
X^2(t)\leq\frac{X^2(0)}{1-2X^2(0)\int_0^tf(\tau)\,d\tau}\quad\hbox{for all }\ t\in[0,T].
$$
Now, the basic energy conservation \eqref{eq:ene} tells us that
$$
\int_0^Tf(t)\,dt\leq C(\rho^*)^6\|\sqrt{\rho_0}v_0\|_2^4.
$$
Hence, if $(\rho^*)^{\frac32}\|\sqrt{\rho_0}\, v_0\|_{2}\|\nabla v_0\|_2$ is small enough, 
then we have \eqref{eq:H3} for all value of $T.$  
If that  condition is not satisfied, then we observe that 
$$
2X^2(0)\int_0^Tf(t)\,dt\leq\frac12\quad\hbox{implies}\quad
\sup_{t\in[0,T]} X^2(t)\leq 2X^2(0).
$$
Of course,  one can use the fact that
$$
\int_0^T f(t)\,dt \leq C(\rho^*)^6   \|\sqrt\rho_0 v_0\|_2^{2}\:T
\sup_{t\in[0,T]} X(t),
$$
which, together with a bootstrap argument, ensures the second part of the statement.

The proof of the last part of statement goes exactly as in the 2D case. The only difference is that
Sobolev embedding $H^1(\T^3)\hookrightarrow L_p(\T^3)$ holds true only for $p\leq6.$
\end{proof}

 
\subsection{Estimates on time derivatives}

Our next  aim is to  bound  $\sqrt{\rho t}\, v_t$ and $\sqrt t\,\nabla v_t$
in $L_\infty([0,T];L_2)$ and $L_2([0,T];L_2),$   respectively, in terms of the data. 
To achieve it,   let us 
differentiate $(INS)_2$ with respect to $t$:
\begin{equation}\label{t2}
 \rho v_{tt} + \rho_t v_t + \rho_t v \cdot \nabla v + \rho v_t \cdot \nabla v + \rho v \cdot \nabla v_t - \Delta v_t + \nabla P_t =0.
\end{equation}
Then, multiplying \eqref{t2} by $\sqrt t$ yields
$$
 \rho (\sqrt t\, v_t)_t  - \frac1{2\sqrt t} \rho v_t + \sqrt t\,\rho_t v_t + \sqrt t\,\rho_t v \cdot \nabla v + \sqrt t\, \rho v_t \cdot \nabla v + \sqrt t\, \rho v \cdot \nabla v_t 
- \Delta(\sqrt t\, v_t) + \nabla(\sqrt t\, P_t) =0.
$$
Taking the $L^2$ scalar product with  $\sqrt t\, v_t,$ we get
\begin{equation}\label{t4}
 \frac 12 \frac{d}{dt} \int_{\T^d} \rho t |v_t|^2 \,dx + \int_{\T^d} t|\nabla v_t|^2 \,dx = \sum_{i=1}^5 I_i,
\end{equation}
with
\begin{eqnarray}\label{t5}
 I_1&=& \frac12  \int_{\T^d} \rho |v_t|^2 \,dx,\\\label{t6}
 I_2&=& -\int_{\T^d} t \rho_t |v_t|^2 \,dx, \\\label{t7}
 I_3&=&- \int_{\T^d} \bigl(\sqrt t\,\rho_t v \cdot \nabla v\bigr)\cdot( \sqrt t\, v_t) \,dx,\\\label{t8}
 I_4&=& -\int_{\T^d} \bigl(\sqrt t\,\rho v_t \cdot \nabla v\bigr)\cdot(  \sqrt t\, v_t )\,dx,\\\label{t9}
 I_5&=&-\int_{\T^d}\bigl( \sqrt t\, \rho v \cdot \nabla v_t\bigr)\cdot ( \sqrt t\, v_t) \,dx. 
\end{eqnarray}
To bound   $I_2,$ we write that 
$$
\begin{aligned}
 I_2  = \int_{\T^d} t\, \div(\rho v) |v_t|^2\, dx  &\leq 
2\int_{\T^d} t \rho |v|\, |\nabla v_t|\, |v_t|\, dx\\& \leq
C \biggl(\int_{\T^d} \rho t |v_t|^2 \,dx \biggr)^{1/2} 
\biggl(\int_{\T^d} t \rho |v|^2 |\nabla v_t|^2\,dx\biggr)^{1/2} \\&\leq 
C \| \sqrt{\rho t}\, v_t\|_2 \|v\|_\infty \|\sqrt t\, \nabla v_t\|_2\\& \leq 
\frac1{10} \|\sqrt t\, \nabla v_t\|_2^2 + C\|\sqrt{\rho t}\, v_t\|_2^2 \|v\|^2_\infty.
\end{aligned}
$$
The   last term may be controlled in terms of the data thanks to Propositions \ref{l:H2} and \ref{l:H3}. Indeed,
from \eqref{b8} or \eqref{eq:H3}, we  get a bound on $v$ in $L_4(\R_+;L_\infty),$  since 
$$
\|v\|_\infty^4\lesssim \|v\|_2^2\|\nabla^2v\|_{2}^2\quad\hbox{if } d=2;\qquad
\|v\|_\infty^4\lesssim \|\nabla v\|_2^2\|\nabla^2v\|_{2}^2\quad\hbox{if } d=3.
$$
To handle  $I_3,$ we use the continuity equation and perform an integration by parts:
$$
 I_3=-\int_{\T^d} (\sqrt t\, \rho_t v \cdot \nabla v)\cdot (\sqrt t\, v_t)\, dx =
  -\int_{\T^d} t\rho v \cdot \nabla [(v \cdot \nabla v)\cdot v_t]\,dx.
$$
Hence
\begin{equation}\label{t13}
 I_3\leq
\int_{\T^d} t \rho |v| \bigl( |\nabla v|^2 |v_t| + |v|\, |\nabla^2 v|\,| v_t| + |v|\,|\nabla v|\, |\nabla v_t|\bigr)\,dx =: I_{31}+I_{32}+I_{33}.
\end{equation}
To bound $I_{31}$ in the 2D case, we just write that for all $t\in[0,T],$ 
\begin{equation}\label{t14}
 I_{31} =\int_{\T^d}  \sqrt{\rho t}\, |v| |\nabla v| |\nabla v| \,|\sqrt{\rho t}\,v_t|\, dx \leq 
  \|v\|_\infty^2\|\sqrt{\rho t} v_t\|^2_2 + C T \rho^*\|\nabla v\|_4^4,
\end{equation}
and one can use again that $v$ is bounded in $L_2(0,T;L_\infty),$ and that $\nabla v$ is bounded in $L_4(0,T;L_4),$ 
owing to Proposition \ref{l:H2} and Inequality \eqref{eq:lad}. 

This argument fails in the 3D case, but one can combine  H\"older inequality and Sobolev embedding $\dot H^1(\T^3)\hookrightarrow L_6(\T^3)$ to get 
for some constant $C_{T,\rho^*}$ depending only on $T$ and~$\rho^*,$
$$\begin{aligned}
I_{31}&\leq \sqrt{\rho^* T}\|\sqrt{\rho t} v_t\|_4\|v\|_6\|\nabla v\|_{24/7}^2\\
&\leq     \sqrt{\rho^* T} \|\sqrt{\rho t} v_t\|_2^{1/4}\|\sqrt t\, v_t\|_6^{3/4}\|v\|_6\|\nabla v\|_{24/7}^2\\
&\leq\frac1{10}\|\nabla \sqrt t\, v_t\|_2^2+ C_{T,\rho^*}\|\sqrt{\rho t}\, v_t\|_2^{2/5}\|\nabla v\|_{24/7}^{16/5}
\|\nabla v\|_2^{8/5}.
\end{aligned}
$$
Then we observe that 
$$\|\nabla v\|_{24/7}^{16/5}\leq C\|\nabla v\|_2^{6/5}\|\nabla^2 v\|_2^{2},$$
whence
\begin{equation}
 I_{31} \leq \frac1{10}\|\nabla\sqrt t\, v_t\|_2^2+C_{T,\rho^*} \|\sqrt{\rho t} v_t\|_2^{2/5} 
 \|\nabla v\|_2^{14/5}\|\nabla^2 v\|_2^{2}.
\end{equation}
For  $I_{32},$ we have
$$
I_{32}= \int_{\T^d}  t \rho |v|^2 |\nabla^2 v| \,|v_t| \,dx
\leq\rho^* T \|\nabla^2v\|_{2}^2+ \|v\|_\infty^4 \|\sqrt{\rho t}\, v_t\|^2_2,
$$
and one can use that $\nabla^2v\in L_2(\R_+;L_2)$ and that 
$v\in L_4(\R_+;L_\infty),$  as already seen before. 
\medbreak
Finally, for $I_{33},$ we just write that 
$$
 I_{33}=\int_{\T^d} t \rho |v|^2 |\nabla v| \,|\nabla v_t|\,dx \leq C\int_{\T^d} t \rho^2 |v|^4 |\nabla v|^2\,dx+\frac1{10}\int_{\T^d} |\nabla \sqrt t\, v_t|^2 \,dx.
 $$
 The first term of the r.h.s. is under control since 
  $v\in L_4(\R_+;L_\infty)$ and $\nabla v \in L_\infty(\R_+;L_2).$ 
\medbreak
To handle the  term $I_4,$  we write that
$$
\begin{aligned}
 I_4 &\leq \|\nabla v\|_{2}\|\sqrt{\rho t}\, v_t\|_{4}^2\\
 &\leq(\rho^*)^{3/4} \|\nabla v\|_{2}\|\sqrt{\rho t}\, v_t\|_{2}^{\frac12}
 \|\sqrt t\,v_t\|_{6}^{\frac32}
 \\&\leq C(\rho^*)^{3/4} \|\nabla v\|_{2}\|\sqrt{\rho t}\, v_t\|_{2}^{\frac12}
 \|\sqrt t\, \nabla v_t\|_{2}^{\frac32}\\
  &\leq \frac1{10} \|\sqrt t\, \nabla v_t\|_{2}^{2}+C(\rho^*)^3 T
   \|\sqrt\rho v_t\|_{2}^{2}\|\nabla v\|_{2}^4.
\end{aligned}
$$
Finally, for  $I_5,$ we  have to observe that
$$ I_5 = \int_{\T^d} |\rho v|\, |\sqrt t\, \nabla v_t|\, |\sqrt t\, v_t|\,dx\leq \frac1{10}\|\nabla \sqrt t\, v_t\|_{2}^2+C\rho^*\|v\|_\infty^2\|\sqrt{\rho t}\, v_t\|_2^2.
$$
So altogether, if $d=2,$ we get for some constant $C_{T,\rho^*}$ depending only on $\rho^*$ and $T,$   
$$
\displaylines{
  \frac{d}{dt}\|\sqrt{\rho t}\, v_t\|_2^2 + \|\nabla \sqrt t\, v_t\|_2^2 
  \leq   C\bigl((1+\rho^*)\|v\|_\infty^2+\|v\|_\infty^4\bigr) \|\sqrt{\rho t} \,v_t\|_2^2
 \hfill\cr\hfill+C_{T,\rho^*}\bigl(\|\nabla v\|_4^4+\|\nabla^2v\|_2^2+\|v\|_\infty^4 \|\nabla v\|_2^2
 +\|\sqrt\rho v_t\|_2^2(1+\|\nabla v\|_2^4)\bigr),}
$$
and if $d=3,$
$$
\displaylines{
  \frac{d}{dt} \|\sqrt{\rho t}\,  v_t\|_2+ \|\nabla \sqrt t\,v_t\|_2^2 
  \leq   C \bigl((1+\rho^*)\|v\|_\infty^2+\|v\|_\infty^4\bigr) \|\sqrt{\rho t} \,v_t\|_2^2
 \hfill\cr\hfill+C_{T,\rho^*}\bigl(\|\sqrt{\rho t}\, v_t\|_2^{2/5}
  \|\nabla v\|_2^{14/5}\|\nabla^2v\|_2^2  +\|\nabla^2v\|_2^2+\|v\|_\infty^4 \|\nabla v\|_2^2
 +\|\sqrt\rho v_t\|_2^2(1+\|\nabla v\|_2^4)\bigr).}
$$
The above two inequalities rewrite
\begin{equation}\label{t15}
  \frac{d}{dt} \biggl( \|\sqrt{\rho t}\, v_t\|_2^2+ \int_0^t\tau \|\nabla v_t\|_2^2 \,d\tau\biggr)
  \leq   h(t)\bigl(1+ \|\sqrt{\rho t}\,  v_t\|_2^2\bigr)\end{equation}
with $h\in L_{1,loc}(\R_+)$ depending only  on  $\rho^*,$ $\|\sqrt \rho_0\, v_0\|_2$ and  $\|\nabla v_0\|_2.$ 
\medbreak
Obviously,  if the solution is smooth with density bounded away from zero, then 
we have
$$\lim_{t\to 0^+} \int_{\T^d}\rho t|v_t(t,x)|^2\,dx=0.
$$
Therefore, an obvious time integration in \eqref{t15} yields
\begin{equation}\label{eq:h}
 \|\sqrt{\rho t}\, v_t\|_2+ \int_0^t\|\nabla\sqrt \tau \, v_t\|_2^2 \,d\tau \leq \exp\biggl\{\int_0^t h(\tau)\,d\tau\biggr\}-1.
\end{equation}
Using  the same argument starting
from time $t_0,$   one arrives at the following lemma:
\begin{lem}\label{l:t-est}
 Assume $d=2,3$  and that the solution is smooth with no vacuum. Then  for  all $t_0,T\geq0,$  we have
\begin{equation}\label{eq:t-est}
 \sup _{t_0\leq t \leq t_0+T} \int_{\T^d} \rho (t-t_0) |v_t|^2 \,dx + \int_{t_0}^{t_0+T}\int_{\T^d}(t-t_0) |\nabla v_t|^2 \,dx\,dt  \\ 
 \leq c(T)  
\end{equation}
with $c(T)$ going to zero as $T \to 0.$ 
\end{lem}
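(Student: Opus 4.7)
The plan is to replay the derivation of \eqref{eq:h} with the shifted time-weight $(t-t_0)$ in place of $t$. Concretely, I would multiply the time-differentiated momentum equation \eqref{t2} by $(t-t_0)\,v_t$ (rather than $t\,v_t$), integrate over $\T^d$, and regroup exactly as in \eqref{t4}--\eqref{t9}. This produces the identity
$$\frac12\frac{d}{dt}\int_{\T^d}\rho(t-t_0)|v_t|^2\,dx+\int_{\T^d}(t-t_0)|\nabla v_t|^2\,dx=\sum_{i=1}^5\widetilde I_i,$$
where each $\widetilde I_i$ is obtained from $I_i$ in \eqref{t5}--\eqref{t9} by replacing the weight $\sqrt t$ by $\sqrt{t-t_0}$. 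The only structural properties of the weight exploited in the bounds on $I_1,\dots,I_5$ are $\frac{d}{dt}(t-t_0)=1$ and the vanishing at the left endpoint; both persist, so every inequality carries over verbatim.

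Next, I would invoke Propositions \ref{l:H2} and \ref{l:H3}, together with the Gagliardo-Nirenberg, Sobolev, and Desjardins-type estimates already used above, to control the intrinsic norms of $v,\nabla v,\nabla^2 v$ and $\sqrt\rho\,v_t$ and thereby bound the $\widetilde I_i$. This yields the shifted analog of \eqref{t15}:
$$\frac{d}{dt}\biggl(\|\sqrt{\rho(t-t_0)}\,v_t\|_2^2+\int_{t_0}^t(\tau-t_0)\|\nabla v_t\|_2^2\,d\tau\biggr)\leq h(t)\bigl(1+\|\sqrt{\rho(t-t_0)}\,v_t\|_2^2\bigr),\qquad t\geq t_0,$$
where $h\in L_1(\R_+)$ depends only on $\rho^*,\|\sqrt{\rho_0}\,v_0\|_2,\|\nabla v_0\|_2$ (and in 3D on the smallness condition \eqref{eq:smalld=3a}, or else on the local-in-time restriction \eqref{eq:T}). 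Crucially, $h$ does \emph{not} depend on $t_0$: each factor building $h$ (namely $\|v\|_\infty, \|\nabla v\|_p$ for suitable $p$, $\|\nabla^2 v\|_2, \|\sqrt\rho\,v_t\|_2, \|\nabla v\|_2$) is controlled on all of $\R_+$ by the a priori bounds established in the previous subsection.

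Since the weighted quantity on the left vanishes at $t=t_0$ (thanks to smoothness and the absence of vacuum), Gronwall's inequality gives
$$\|\sqrt{\rho(t-t_0)}\,v_t\|_2^2+\int_{t_0}^t(\tau-t_0)\|\nabla v_t\|_2^2\,d\tau\leq\exp\!\biggl(\int_{t_0}^t h(\tau)\,d\tau\biggr)-1.$$
Setting $c(T):=\exp\!\bigl(\sup_{t_0\geq 0}\int_{t_0}^{t_0+T}h(\tau)\,d\tau\bigr)-1$ gives a bound uniform in $t_0$, and $c(T)\to 0$ as $T\to 0$ by absolute continuity of the Lebesgue integral applied to $h\in L_1(\R_+)$.

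The only genuinely new point beyond the shift is the bookkeeping verification that $h\in L_1(\R_+)$. In 2D this follows from Proposition \ref{l:H2}, which gives $\nabla^2 v,\sqrt\rho\,v_t\in L_2(\R_+;L_2)$, combined with Ladyzhenskaya's inequality \eqref{eq:lad} (yielding $\nabla v\in L_4(\R_+;L_4)$) and Gagliardo-Nirenberg (yielding $v\in L_4(\R_+;L_\infty)$); in 3D the analogous global bounds come from Proposition \ref{l:H3} under \eqref{eq:smalld=3a}, or from its local-in-time counterpart otherwise. Apart from this, the argument is just a time translation of the derivation of \eqref{eq:h}.
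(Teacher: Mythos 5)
Your proposal is correct and is essentially the paper's own argument: the paper proves Lemma \ref{l:t-est} precisely by rerunning the derivation of \eqref{t15}--\eqref{eq:h} with the origin of time shifted to $t_0$, using that the weighted quantity vanishes at $t=t_0$ for a smooth, vacuum-free solution and that the function $h$ is controlled by the global a priori bounds of Propositions \ref{l:H2} and \ref{l:H3}, hence uniformly in $t_0$. Your closing remark on absolute continuity of the integral of $h$ is exactly the right justification for $c(T)\to0$.
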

From \eqref{eq:h}, one can deduce that  for all $p<\infty$
if $d=2$ (for all $p\leq 6$ if $d=3$), we have
$$\|\sqrt t v_t\|_{L_2(0,T;L_p)}\leq c(T)\quad\hbox{with } c(T)\to 0\ \hbox{for } T\to0.$$
Indeed, denoting by   $ \overline{(v_t)}$  the average of $v_t,$ one can write that
$$
\int_{\T^d} \rho v_t\,dx=M \overline{(v_t)}+\int_{\T^d} \rho(v_t- \overline{(v_t)})\,dx.
$$
Hence,
$$ M |\overline{ (v_t)}|\leq\|\rho\|_2\|\nabla v_t\|_2 + M^{1/2} \|\sqrt\rho\, v_t\|_2.$$
Consequently, by Sobolev embedding,  and because $\|\rho\|_2$ and $M$ are  time independent, 
$$
\|v_t\|_{p}\leq\|v_t- \overline{(v_t)}\|_{p}+|\overline{(v_t)}|
\leq\biggl(C_p+\frac{\|\rho_0\|_2}{M}\biggr)\|\nabla v_t\|_{2}+\frac1{M^{1/2}}\|\sqrt \rho\, v_t\|_2.
$$
This implies that, for all $p<\infty$ if $d=2$ and for all $p\leq6$ if $d=3,$ we have
\begin{equation}\label{t18}
 \|\sqrt t\, v_t\|_{L_2(0,T;L_p)} \leq 
 \biggl(C_p+\frac{\|\rho_0\|_2}{M}\biggr)
 \|\sqrt t\, \nabla v_t\|_{L_2(0,T;L_2)} +\frac1{M^{1/2}}\|\sqrt{\rho t}\, v_t\|_{L_2(0,T;L_2)}. 
\end{equation}

Another consequence of \eqref{eq:h} is that we have some  control on  the regularity of $v$ with respect to 
the time variable. This  is given by the following lemma. 
\begin{lem}  Let  $p\in[1,\infty]$  and $z:(0,T)\times\T^d\to\R$ satisfy 
$z\in L_2(0,T;L_p)$ and $\sqrt t\, z_t\in   L_2(0,T;L_p).$
Then $z$ is in $H^{\frac12-\alpha}(0,T;L_p)$ for all $\alpha\in(0,1/2)$
and  we have
\begin{equation}\label{eq:fractionnaire}
\| z\|_{H^{\frac12-\alpha}(0,T;L_p)}^2\leq \|z\|_{L_2(0,T;L_p)}^2 +
C_{\alpha,T}\|\sqrt t\, z_t\|_{L_2(0,T;L_p)}^2,
\end{equation}
with $C_{\alpha,T}$ depending only on $\alpha$ and on $T.$
\end{lem}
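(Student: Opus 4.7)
The plan is to exploit the Slobodeckij (Gagliardo) definition of the fractional Sobolev space
$$\|z\|_{H^{\frac12-\alpha}(0,T;L_p)}^2 = \|z\|_{L_2(0,T;L_p)}^2 + \int_0^T\!\!\int_0^T \frac{\|z(t)-z(s)\|_p^2}{|t-s|^{2-2\alpha}}\,dt\,ds,$$
so that only the double-integral seminorm has to be controlled by $\|\sqrt{t}\,z_t\|_{L_2(0,T;L_p)}$. By symmetry I restrict to $s<t$, write $z(t)-z(s)=\int_s^t z_\tau(\tau)\,d\tau$, and invoke Minkowski's integral inequality to bring the $L_p$ norm inside the time integral.

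The key analytic input is a weighted Cauchy-Schwarz with the weight $\tau^{-1/2}$ tailored precisely to the hypothesis that $\sqrt{\tau}\,z_\tau\in L_2(0,T;L_p)$:
$$\|z(t)-z(s)\|_p^2 \leq \left(\int_s^t \frac{d\tau}{\tau}\right)\!\left(\int_s^t \tau\,\|z_\tau(\tau)\|_p^2\,d\tau\right) = \log(t/s)\int_s^t \tau\,\|z_\tau\|_p^2\,d\tau.$$
Substituting into the seminorm and applying Fubini on the simplex $\{0<s<\tau<t<T\}$ reorganises the bound as
$$[z]^2_{H^{\frac12-\alpha}(0,T;L_p)} \leq 2\int_0^T \tau\,\|z_\tau(\tau)\|_p^2\,K(\tau)\,d\tau,\qquad K(\tau):=\int_0^\tau\!\!\int_\tau^T \frac{\log(t/s)}{(t-s)^{2-2\alpha}}\,dt\,ds.$$

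The only genuine obstacle is thus to show that $K(\tau)\leq C_{\alpha,T}$ uniformly in $\tau\in(0,T)$. To carry this out I would first bound $\log(t/s)\leq\log(T/s)$, then perform the $t$-integration using that $(t-s)^{2\alpha-1}$ is decreasing (as $2\alpha-1<0$), which yields
$$\int_\tau^T (t-s)^{2\alpha-2}\,dt \leq \frac{(\tau-s)^{2\alpha-1}}{1-2\alpha}\cdotp$$
It then remains to estimate $\int_0^\tau \log(T/s)(\tau-s)^{2\alpha-1}\,ds$, which I would handle by the rescaling $s=\tau u$ reducing the integral to $\tau^{2\alpha}\int_0^1(\log(T/\tau)-\log u)(1-u)^{2\alpha-1}\,du$. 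The positive exponent $2\alpha>0$ ensures the integrability of $(1-u)^{2\alpha-1}$ and of $\log u \cdot (1-u)^{2\alpha-1}$ on $(0,1)$, and the remaining prefactor $\tau^{2\alpha}(1+|\log\tau|)$ stays bounded on $(0,T)$. Combining the uniform kernel bound with the weighted derivative assumption gives the inequality \eqref{eq:fractionnaire} with a constant $C_{\alpha,T}$ depending only on $\alpha$ and $T$, which completes the proof.
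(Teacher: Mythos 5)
Your proof is correct and follows essentially the same route as the paper's: the Gagliardo/finite-difference characterization of $H^{\frac12-\alpha}(0,T;L_p)$, the weighted Cauchy--Schwarz splitting $z_\tau=\tau^{-1/2}\cdot\tau^{1/2}z_\tau$ giving the $\log(t/s)$ factor, and Fubini. The only difference is bookkeeping: you keep $\tau\|z_\tau\|_p^2$ inside and bound the kernel $K(\tau)$ uniformly in $\tau$, whereas the paper pulls out $\|\sqrt t\,z_t\|_{L_2(0,T;L_p)}^2$ first and then checks finiteness of the resulting triple integral; your version makes the final convergence check somewhat more explicit.
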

\begin{proof} The proof  relies on the definition of Sobolev norms in terms of finite differences.
Indeed, we have
$$
\| z\|_{H^{\frac12-\alpha}(0,T;L_p)}^2= \|z\|_{L_2(0,T;L_p)}^2 +\int_0^T\biggl(\int_0^{T-h}\frac{\|v(t+h)-v(t)\|_p^2}{h^{2-2\alpha}}\,dt\biggr)dh.
$$
Now, we observe that
$$\begin{aligned}
\int_0^{T-h}\|v(t+h)-v(t)\|_p^2\,dt&\leq
\int_0^{T-h} \Big\| \int_t^{t+h} \sqrt s v_t (s)\, \frac{ds}{\sqrt s} \Big\|_{p}^2\,dt\\
&\leq\int_0^{T-h} \left(\int_t^{t+h} s^{-1}\, ds\right) \left(\int_t^{t+h} s \|v_t (s)\|_{p}^2 \,ds\right) dt\\
&\leq \|\sqrt t\, v_t\|_{L_2(0,T;L_p)}^2\int_0^{T-h} \biggl(\int_t^{t+h} s^{-1}\, ds\biggr)dt.
\end{aligned}$$
{}From Fubini theorem, it is not difficult to see that, if $0<\alpha<1/2,$
$$
\int_0^Th^{2\alpha-2}\biggl(\int_0^{T-h} \biggl(\int_t^{t+h}\, \frac{ds}s\biggr)dt\biggr)dh
=\frac{1}{1-2\alpha}\int_0^T\biggl(\int_t^T\bigl((s-t)^{2\alpha-1}-T^{2\alpha-1}\bigr)\,\frac{ds}s\biggr)dt.
$$
Therefore, 
$$
\int_0^T\biggl(\int_0^{T-h}\frac{\|v(t+h)-v(t)\|_p^2}{h^{2-2\alpha}}\,dt\biggr)dh\leq C_{\alpha,T}  \|\sqrt t\, v_t\|_{L_2(0,T;L_p)}^2,
$$
which completes the proof of the lemma. 
\end{proof}


\subsection{Shift of integrability} 

The results that we proved so far will enable us   to bound  $\nabla v$ in $L_1(0,T;L_\infty),$  in terms of the data and of $T.$  
This will be achieved thanks to the `shift  of integrability' method alluded to in the introduction.
 Let us first examine  the 2D case.
 \begin{lem}\label{l:high} If $d=2$ then for all $T>0,$ $p\in[2,\infty]$ and $\varepsilon$ small enough, we have
\begin{equation}\label{s7a}
\| \nabla^2 \sqrt t\, v\|_{L_p(0,T;L_{p^*-\varepsilon})}
+\|\nabla \sqrt t\, P\|_{L_p(0,T;L_{p^*-\varepsilon})}\leq C_{0,T}
\end{equation}
where  $p^*:=\frac{2p}{p-2},$ and $C_{0,T}$ depends only on $\rho^*$, $\|\sqrt{\rho_0}\,v_0\|_{2},$ 
$\|\nabla v_0\|_{2},$ 
 $p$ and $\ep.$
\medbreak
Furthermore, 
for all $1\leq s<2,$ there exists $\beta>0$ such that  
\begin{equation}\label{s8}
\int_0^T \|\nabla v(t)\|^s_\infty\,dt \leq C_{0,T}\, T^\beta.
\end{equation}
\end{lem}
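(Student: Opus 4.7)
\medskip

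\noindent\textbf{Proof plan for Lemma \ref{l:high}.}
The plan is to read the momentum equation as a stationary Stokes system with time as a parameter, namely \eqref{s1}, and to apply the classical $L_q$--maximal regularity for the Stokes problem on the torus. More precisely, for a.e. $t$ the pair $(\sqrt t\,v,\sqrt t\,P)$ solves
\begin{equation*}
-\Delta(\sqrt t\,v)+\nabla(\sqrt t\,P)=F(t):=-\rho\sqrt t\,v_t-\sqrt t\,\rho\,v\cdot\nabla v,\qquad \div(\sqrt t\,v)=0,
\end{equation*}
so for every $q\in(1,\infty)$,
\begin{equation*}
\|\nabla^2\sqrt t\,v(t)\|_q+\|\nabla\sqrt t\,P(t)\|_q\lesssim \|F(t)\|_q.
\end{equation*}
Therefore the task reduces to bounding $F$ in $L_p(0,T;L_{p^\ast-\varepsilon})$, and the goal \eqref{s8} will be derived afterwards by Sobolev embedding and a H\"older argument in time.

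\medskip

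For the $v_t$--piece of $F$, I would combine the time-weighted bound $\sqrt{\rho t}\,v_t\in L_\infty(0,T;L_2)$ coming from \eqref{eq:h} with the estimate $\sqrt t\,v_t\in L_2(0,T;L_q)$ for any finite $q$ given by \eqref{t18} (using the 2D Sobolev embedding $H^1\hookrightarrow L_q$). Interpolating between $L_\infty(0,T;L_2)$ and $L_2(0,T;L_q)$ produces, for any $p\in[2,\infty]$, the inclusion
\begin{equation*}
\sqrt t\,v_t\in L_p(0,T;L_r)\quad\text{for every}\quad \tfrac1r>\tfrac12-\tfrac1p,
\end{equation*}
and letting $q\to\infty$ allows $r$ to approach $p^\ast=\tfrac{2p}{p-2}$ from below, which yields $\rho\sqrt t\,v_t\in L_p(0,T;L_{p^\ast-\varepsilon})$ with the desired norm bound (using $\rho\le\rho^\ast$). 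For the convective piece, $v\cdot\nabla v$, I would use Proposition \ref{l:H2} together with the 2D Gagliardo--Nirenberg inequalities to put $v\in L_4(0,T;L_\infty)$ (from $\|v\|_\infty^2\lesssim\|v\|_2\|\nabla^2v\|_2$) and $\nabla v\in L_{a}(0,T;L_{b})$ for suitable $(a,b)$ obtained by interpolation between $L_\infty(0,T;L_2)$ and $L_2(0,T;L_m)$ (for any finite $m$). Multiplying by the factor $\sqrt t\le\sqrt T$ and applying H\"older in space and in time then gives the required control of $\sqrt t\,\rho\,v\cdot\nabla v$ in $L_p(0,T;L_{p^\ast-\varepsilon})$.

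\medskip

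To deduce \eqref{s8}, I would embed: since in dimension two $W^1_{p^\ast-\varepsilon}\hookrightarrow L_\infty$ as soon as $p^\ast-\varepsilon>2$, the first part of the lemma yields $\sqrt t\,\nabla v\in L_p(0,T;L_\infty)$ for every $p\in(2,\infty]$. Then H\"older in time gives, for $1\le s<2$,
\begin{equation*}
\int_0^T\|\nabla v\|_\infty^s\,dt=\int_0^T\|\sqrt t\,\nabla v\|_\infty^s\,t^{-s/2}\,dt\le\|\sqrt t\,\nabla v\|_{L_p(0,T;L_\infty)}^{s}\Bigl(\int_0^T t^{-\frac{ps}{2(p-s)}}\,dt\Bigr)^{\!\frac{p-s}{p}},
\end{equation*}
and the last integral is finite (with $T^\beta$ power) provided $\tfrac1s-\tfrac12>\tfrac1p$, which is satisfied for $p$ large enough since $s<2$.

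\medskip

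The main obstacle is the first step: producing enough integrability of $\rho v_t$ and of $\rho\,v\cdot\nabla v$ to be able to invert $-\Delta+\nabla$ in $L_{p^\ast-\varepsilon}$. In particular, one must check that the interpolation between the time-integrated bound on $\sqrt t\,\nabla v_t\in L_2(0,T;L_2)$ (through \eqref{t18}) and the pointwise-in-time bound on $\sqrt{\rho t}\,v_t\in L_\infty(0,T;L_2)$ is sharp enough to reach any $L_r$ with $r<p^\ast$; it is here that the loss of $\varepsilon$ in the spatial exponent (and the restriction $p^\ast-\varepsilon$ rather than $p^\ast$) comes from, and that is the reason we only obtain $\nabla v\in L_s(0,T;L_\infty)$ for $s<2$ rather than $s=2$.
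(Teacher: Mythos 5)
Your proposal is correct and follows essentially the same route as the paper: rewrite the momentum equation as a time-parametrized Stokes system, bound the right-hand side $\rho\sqrt t\,v_t+\sqrt t\,\rho v\cdot\nabla v$ in $L_p(0,T;L_{p^*-\varepsilon})$ by interpolating the $L_\infty(0,T;L_2)$ bound on $\sqrt{\rho t}\,v_t$ with the $L_2(0,T;L_q)$ bound from \eqref{t18} (and similarly for the convective term), apply Stokes maximal regularity, and conclude \eqref{s8} via $W^1_r\hookrightarrow L_\infty$ and H\"older in time under the condition $ps<2(p-s)$. The only cosmetic point is that the interpolation in time should be phrased for $\rho\sqrt t\,v_t$ (which lies in both endpoint spaces) rather than for $\sqrt t\,v_t$ itself, since only the $\sqrt\rho$-weighted quantity is controlled in $L_\infty(0,T;L_2)$.
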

\begin{proof} 
{}From  $(INS)_2,$ we gather that 
\begin{equation}\label{s1a}
 \begin{array}{lcr}
  -\Delta \sqrt t\, v + \nabla \sqrt t\, P=-\sqrt{\rho t}\, v_t - \sqrt t\, \rho v \cdot \nabla v & \mbox{ in } & (0,T)\times\T^2,\\[1ex]
\div \sqrt t\, v =0 & \mbox{ in } & (0,T)\times\T^2.
 \end{array}
\end{equation}
As $\rho$ is bounded and $\sqrt{\rho t}\, v_t$ is in $L_\infty(0,T;L_2),$  we have  
$\rho \sqrt t\, v_t$ in $L_\infty(0,T;L_2),$ too.  Furthermore, according to \eqref{t18},
$\sqrt t\, v_t$ (and thus $\rho \sqrt t\, v_t$) is in $L_2(0,T;L_q)$ for all finite $q.$
Therefore,  by H\"older inequality, we  have
\begin{equation}\label{s2}
 \|\rho \sqrt t\,v_t \|_{L_p(0,T;L_r)}\leq C_{0,T}\quad\hbox{for all }\ p\in[2,\infty]\ \hbox{ and }\ 
 2\leq r< p^*.
\end{equation}
Similarly, the bounds for $\nabla v$ in $L_\infty(0,T;L_2)\cap L_2(0,T;H^1)$ imply that 
\begin{equation}\label{s3}
\|\nabla v\|_{L_p(0,T;L_r)}\leq C_{0,T} \quad\hbox{for all }\  p\geq2\ \hbox{ and }\ r<p^*,
\end{equation}  and,  as obviously, 
$v$ (and thus $\sqrt t\,\rho v$) is bounded in all spaces $L_q(0,T;L_r)$ (except $q=r=\infty$), one can
conclude that 
 \begin{equation}\label{s4}
 \|\sqrt t\, \rho v \cdot \nabla v \|_{L_p(0,T;L_{r})}\leq C_{0,T}\quad\hbox{for all }\ p\in[2,\infty]\ \hbox{ and } 2\leq r<p^*. 
  \end{equation}
Then, the maximal regularity estimate for the Stokes system implies that
\begin{equation}\label{s7}
 \| \nabla^2 \sqrt t\, v, \nabla \sqrt t\, P\|_ {L_p(0,T;L_r)}\leq C_{0,T}\ \hbox{ for all }\
 2\leq p\leq\infty\ \hbox{ and }\ 2\leq r<p^*. 
\end{equation}

To prove \eqref{s8}, fix $p\in[2,\infty)$ so that $ps<2(p-s)$ and $1\leq s<2,$ then    take $r\in]2,p_*[$  (so that we have the embedding 
$W^{1}_{r}\hookrightarrow L_\infty$). 
We get, remembering \eqref{s3}, 
$$\begin{aligned}
 \biggl(\int_0^T\|\nabla v\|^s_\infty\,dt\biggr)^{\frac1s} &\lesssim
 \biggl(\int_0^T \bigl(t^{-1/2} \,\|\sqrt t\, \nabla v\|_{W^1_{r}}\bigr)^s\, dt\biggr)^{\frac1s}\\
  &\lesssim\biggl(\int_0^T t^{-\frac{ps}{2p-2s}}\,dt\biggr)^{\frac1s-\frac1p}
   \|\nabla\sqrt t\, v\|_{L_{p}(0,T;W^1_{r})} \leq C_{0,T}\,T^{\frac{2p-2s-ps}{2ps}},\end{aligned}
$$ whence the desired result.
\end{proof}

In  the 3D case,  Lemma \ref{l:high} becomes:
 \begin{lem}\label{l:high3}
 For all $T>0$ and $p\in[2,\infty],$  we have
\begin{equation}\label{s7b}
\| \nabla^2 \sqrt t\, v\|_{L_p(0,T;L_r)}+\|\nabla\sqrt t\, P\|_{L_p(0,T;L_r)}\leq C_{0,T}    \quad\hbox{for}\quad
 2\leq r\leq \frac{6p}{3p-4},
\end{equation}
 where $C_{0,T}$ depends only on $\rho^*$, $\|\sqrt{\rho_0}\,v_0\|_{2},$ $\|\nabla v_0\|_{2},$  $p$ and $\ep.$
\smallbreak
Furthermore, if   $1\leq s<4/3,$   then we have for some $\beta>0,$
\begin{equation}\label{s8-3}
\int_0^T \|\nabla v(t)\|^s_\infty\,dt \leq  C_{0,T}\,T^\beta.
\end{equation}
\end{lem}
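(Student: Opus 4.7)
My plan is to imitate the proof of Lemma \ref{l:high}, the extra difficulty being that in dimension three the velocity embeds only into $L_6$ spatially, rather than every $L_r$ with $r<\infty$ as in 2D. First I will recast $(INS)_2$ multiplied by $\sqrt t$ as the stationary Stokes system
\[
-\Delta(\sqrt t\,v)+\nabla(\sqrt t\,P)=-\sqrt t\,\rho\,v_t-\sqrt t\,\rho\,v\cdot\nabla v,\qquad \div(\sqrt t\,v)=0\quad\hbox{in }(0,T)\times\T^3,
\]
and invoke maximal $L_p(L_r)$ regularity for the steady Stokes operator. It then suffices to place both forcing terms in $L_p(0,T;L_r)$ throughout the range $p\in[2,\infty]$, $2\leq r\leq 6p/(3p-4)$.

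For the linear part, \eqref{eq:h} gives $\sqrt{\rho t}\,v_t\in L_\infty(0,T;L_2)$, while \eqref{t18} combined with $H^1(\T^3)\hookrightarrow L_6(\T^3)$ gives $\sqrt t\,v_t\in L_2(0,T;L_6)$. H\"older interpolation in time then places $\sqrt t\,\rho\,v_t$ in $L_p(0,T;L_{6p/(3p-4)})$, matching the target range exactly.

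The convection term $\sqrt t\,\rho\,v\cdot\nabla v$ will be the main obstacle. I will combine $v\in L_\infty(0,T;L_6)\cap L_q(0,T;L_\infty)$ for every $q<4$---the second ingredient obtained by interpolating $L_\infty H^1\cap L_2H^2$ into $L_q H^{1+2/q}$ via H\"older in time and invoking $H^{3/2+\delta}\hookrightarrow L_\infty$---with $\nabla v\in L_\infty L_2\cap L_2L_6$. A direct H\"older estimate only places the convection term in $L_p(0,T;L_{3p/(2p-2)})$, whose exponent is strictly below the claimed $6p/(3p-4)$; my plan is to close the gap by a short bootstrap. A first application of Stokes regularity with the weaker bound yields $\nabla^2\sqrt t\,v\in L_p L_{3p/(2p-2)}$, from which Sobolev embedding promotes $\sqrt t\,\nabla v$ into a higher-integrability class; re-inserting the improved bound into H\"older enlarges the admissible range for the convection term, and iterating finitely many times reaches $L_p(0,T;L_{6p/(3p-4)})$. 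One last application of Stokes maximal regularity then gives \eqref{s7b}.

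To prove \eqref{s8-3}, note that $6p/(3p-4)>3$ precisely when $p<4$, so the Sobolev embedding $W^{1,r}(\T^3)\hookrightarrow L_\infty(\T^3)$ for $r>3$ together with \eqref{s7b} puts $\sqrt t\,\nabla v$ in $L_p(0,T;L_\infty)$ for every $p<4$. Given $s<4/3$, I will pick $p\in(s,4)$ with $s<2p/(p+2)$ and apply H\"older:
\[
\int_0^T\|\nabla v\|_\infty^s\,dt=\int_0^T t^{-s/2}\|\sqrt t\,\nabla v\|_\infty^s\,dt\leq\Bigl(\int_0^T t^{-\frac{sp}{2(p-s)}}\,dt\Bigr)^{\!1-s/p}\|\sqrt t\,\nabla v\|_{L_p(0,T;L_\infty)}^s,
\]
where the constraint $s<2p/(p+2)$ ensures the first factor equals a positive power of $T$, delivering \eqref{s8-3}.
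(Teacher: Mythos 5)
Your proposal is correct and follows essentially the same route as the paper: recast the weighted momentum equation as a steady Stokes problem, interpolate the time-derivative term between $L_\infty(0,T;L_2)$ and $L_2(0,T;L_6)$ to land exactly in $L_p(0,T;L_{6p/(3p-4)})$, observe that the direct H\"older bound on the convection term only reaches $L_p(0,T;L_{3p/(2p-2)})$, and bootstrap through one round of Stokes regularity plus Sobolev embedding; the same H\"older-in-time computation with the constraint $s<2p/(p+2)$, $p<4$, then gives \eqref{s8-3}. The only refinement worth noting is that a single bootstrap iteration already suffices: once $\sqrt t\,\nabla v\in L_p(0,T;L_q)$ with $\tfrac2p+\tfrac3q=1$, pairing it with $\rho v\in L_\infty(0,T;L_6)$ places the convection term directly in $L_p(0,T;L_{6p/(3p-4)})$.
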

\begin{proof} 
From Lemma \ref{l:t-est} and the embedding of $\dot H^1(\T^3)$ in $L_6(\T^3),$ 
we readily get that
\begin{equation}\label{s2-3}
 \|\rho \sqrt t\,v_t \|_{L_p(0,T;L_r)}\leq C_{0,T}\quad\hbox{for all }\ p\in[2,\infty]\ \hbox{ and }\ 
 2\leq r\leq \frac{6p}{3p-4}\cdotp
\end{equation}
We claim that we have the same type of bound for $\sqrt t\,\rho v\cdot\nabla v.$
   However, one has to proceed in two steps to get the full range of indices. 
   As a start, we observe that (both properties
   being just consequences of Theorem \ref{l:H3} and obvious embedding):
   $$
   \sqrt t\,\rho v\in L_4(0,T;L_\infty)\quad\hbox{and}\quad
   \nabla v\in L_4(0,T;L_3)\quad\hbox{implies }\ \sqrt t\,\rho v\cdot\nabla v\in L_2(0,T;L_3),
   $$
   and that, similarly
   $$\sqrt t\,\rho v\in L_\infty(0,T;L_6)\quad\hbox{and}\quad
   \nabla v\in L_\infty(0,T;L_2)\quad\hbox{implies }\ \sqrt t\,\rho v\cdot\nabla v\in L_\infty(0,T;L_{3/2}).
   $$
Therefore, interpolating and using H\"older inequality yields 
$$
 \sqrt t\,\rho v\cdot\nabla v\in L_p(0,T;L_r)\quad\hbox{for all}\quad
 (p,r) \ \hbox{ such that }\ p\geq2\ \hbox{ and }\ \frac2p+\frac3r=2.
 $$
 Then using the maximal regularity properties of \eqref{s1a} yields
 \begin{equation}\label{s4-3}
\|\nabla^2\sqrt t\,v,\ \nabla \sqrt t\,P\|_{L_p(0,T;L_r)}\leq C_{0,T}\quad\hbox{for all}\quad
 (p,r) \ \hbox{ such that }\ p\geq2\ \hbox{ and }\ \frac2p+\frac3r=2.
 \end{equation}
 From this, using  the bound for  $\rho v$ in $L_\infty(0,T;L_6)$ 
 and  the embedding $W^1_r(\T^3)\hookrightarrow L_q(\T^3)$ with $\frac3q=\frac3r-1$ if $1\leq r<3$
 (which implies that $\nabla\sqrt t\, v$ is bounded in $L_p(0,T;L_q)$ with  $\frac2p+\frac3r=2$),
  one gets  \eqref{s4-3} for the full range of indices. 
 \smallbreak
 To prove \eqref{s8-3}, fix $p\in(2,4)$ such that  $ ps <2p-2s$ (our condition on $s$ makes it possible),  
 and take   $r=\frac{6p}{3p-4}\cdotp$  Using that 
$W^{1}_{r}\hookrightarrow L_\infty$ (because  $r>3$ for $2<p<4$), one can write that
$$
 \biggl(\int_0^T\|\nabla v\|^s_\infty\,dt\biggr)^{\frac1s} \lesssim
 \biggl(\int_0^T \|\sqrt t\, \nabla v\|_{W^1_{r}}^s\, \frac{dt}{\sqrt t}\biggr)^{\frac1s}
  \lesssim\biggl(\int_0^T t^{-\frac{ps}{2p-2s}}\,dt\biggr)^{\frac1s-\frac1p}
   \|\nabla\sqrt t\, v\|_{L_{p}(0,T;W^1_{r})} 
$$ whence  the desired result.
\end{proof}


\subsection{The proof of existence}

Here we  briefly explain the issue of existence.  For expository purpose, we focus on global results (that is either $d=2$ 
or $d=3$ and the velocity is small), and leave to the reader the  construction of local solutions in the case of large $v_0,$  if $d=3.$

The general idea is to take advantage of classical results to construct smooth solutions corresponding to smoothed out approximate data with no vacuum, 
then to pass to the limit. 
More precisely, consider
$$
 v^\ep_0 \in C^{\infty}(\T^d)\ \hbox{ with }\ \div v^\varepsilon_0=0, \ \ \mbox{ and }\  \rho^\varepsilon_0 \in C^{\infty}(\T^d) \ \hbox{ with }\   \ep\leq \rho^{\varepsilon}_0\leq\rho^*,
$$
such that
\begin{equation}
 v^\varepsilon_0 \to v_0 \mbox{ in } H^1, \;\;\;\;\;  \rho^\varepsilon_0 \rightharpoonup \rho_0 \mbox{ in } L_\infty\ \hbox{weak *}
\  \hbox{ and }   \rho_0^\ep\to \rho_0\mbox{ in } L_p, \ \hbox{if } p<\infty.
\end{equation}

Then, in light of the classical strong solution theory for  $(INS)$  (see \cite{LS} and more recent developments in  \cite{Dan1}), 
there exists a unique global smooth solution $(\rho^\ep,v^\ep,P^\ep)$ corresponding to data $(\rho_0^\ep,v_0^\ep),$ and  satisfying $\ep\leq \rho^\ep\leq\rho^*.$
\smallbreak
Being smooth, the triplet  $(\rho^\ep,v^\ep,P^\ep)$ satisfies  all the priori estimates of the previous subsection, and thus in particular,
\begin{equation}\label{E3}
\|\sqrt{\rho^\ep}v^\ep_t,\nabla P^\ep\|_{L_2(\R_+\times\T^d)}+
  \|v^\varepsilon\|_{L_\infty(\R_+;H^1) \cap L_2(\R_+;H^2)} \leq C(\|\sqrt{\rho} v\|_2,\|\nabla v_0\|_2,\rho^*),
  \end{equation}
  and also, thanks to \eqref{eq:h},
    \begin{multline}\label{E4}  \sup_{t\in [t_0,t_0+T)}\Bigl((t-t_0) \|\sqrt{\rho^\ep(t)} v^\ep_t(t)\|_2^2\Bigr) \\+ 
\int_{t_0}^{t_0+T}(t-t_0) \|\nabla v_t^\ep\|_2^2\, dt \leq C(T,\|\sqrt{\rho_0} v_0\|_2,\|\nabla v_0\|_2,\rho^*),
\end{multline}
and, according to Inequality \eqref{eq:fractionnaire}, for $\alpha\in(0,1/2),$ 
 $$  \|v^\varepsilon\|_{H^{\frac12-\alpha}(0,T;L_2)} \leq C(T,\|\sqrt{\rho_0} v_0\|_2,\|\nabla v_0\|_2,\rho^*).$$
 Interpolating with \eqref{E3} yields for small enough $\eta>0,$
 \begin{equation}\label{E5}
   \|v^\varepsilon\|_{H^{\eta}(0,T\times \T^d)} \leq C(T,\|\sqrt{\rho_0} v_0\|_2,\|\nabla v_0\|_2,\rho^*).
   \end{equation}
 
 The bounds on $v^\ep$ and standard compact embedding imply  that, up to subsequence,  $v^\varepsilon \to v \mbox{ in } L_{2,loc}(\R_+;H^1)$
 for some $v$ that, in addition, satisfies \eqref{E3}  (as regards $\nabla P^\ep$ and $v^\ep$) and \eqref{E5}. 
 For the density, we have  $ \rho^\varepsilon \rightharpoonup \rho \mbox{ in } L_{\infty}(\R_+;L_\infty)$
 and $0\leq\rho\leq\rho^*.$  All those informations are more than enough to justify that  $(\rho,v)$  is a weak solution to $(INS),$ namely  
\begin{equation}\label{E7}
(\rho(t) v(t),\phi(t)) -(\rho_0 v_0,\phi_0) -\int_0^t (\rho v, \phi_t)\,d\tau 
-\int_0^t (\rho v \otimes v,\nabla \phi)\,d\tau + \int_0^t(\nabla v,\nabla \phi)\,d\tau=0 
\end{equation}
for all smooth compactly supported divergence-free vector function $\phi \in C^{\infty}([0,\infty) \times \T^d; \R^d),$
and that  the continuity equation is fulfilled in a distributional meaning:
\begin{equation}\label{E9}
 \rho_t + \div (\rho v) =0 \mbox{ in } \mathcal{D}'([0,\infty)\times\T^d).
\end{equation}
 Now,  arguing as in e.g. \cite{DZP}, page 2405, one can show that
  $ (\rho^\varepsilon)^2 \rightharpoonup(\rho)^2 \mbox{ in } L_{\infty}(\R_+;L_\infty),$
  which eventually implies that 
  \begin{equation}\label{eq:rho}
  \rho^\varepsilon \to \rho\ \hbox{  in }\  \cC(\R_+;L_p)\ \hbox{ for all finite }\ p.\end{equation}
  Therefore  \eqref{E4} is satisfied by $(\rho,v).$
Furthermore,   \eqref{E3} and \eqref{E9} applied to the formulation \eqref{E7} yields that the momentum equation is fulfilled in the strong sense, i.e.
\begin{equation}
 \rho v_t + \rho v \cdot \nabla v - \Delta v +\nabla P = 0 \mbox{ in } L_2(\R_+;L_2)
\end{equation}
for some pressure function $\nabla P$ in $L_2(\R_+;L_2)$ that satisfies \eqref{E3}. 
\medbreak
Of course, being smooth, the solution $(\rho^\ep,v^\ep,P^\ep)$ fulfills \eqref{eq:ene}, \eqref{eq:m}, \eqref{eq:M}, \eqref{eq:infsup}, and thus
\begin{equation}\label{eq:eneep}
\|\sqrt{\rho^\ep(t)}\,v^\ep(t)\|_2^2+2\int_{t_0}^t\|\nabla v^\ep\|_2^2\,d\tau= \|\sqrt{\rho^\ep(t_0)}\,v^\ep(t_0)\|_2^2\quad\hbox{for all }\ t,t_0\geq0.
\end{equation}
By construction, in the case $t_0=0,$ the last term tends to $\|\sqrt\rho_0 v_0\|_2^2,$ and the fact that $v^\ep\to v$ in $L_{2,loc}(\R_+;H^1)$ guarantees
that, for all $t\geq0,$  the second term converges to $\int_{t_0}^t\|\nabla v\|_2^2\,d\tau.$
Next, \eqref{eq:h} guarantees that $\nabla v_t^\ep$ is bounded in $L_2(t_0,T\times\T^d)$ 
for all $0<t_0<T.$  Then combining with \eqref{t18}  gives boundedness of
$v^\ep_t$ in $L_2(t_0,T;H^1).$  Now, because $v^\ep$ is bounded in $L_\infty(t_0,T;H^1)$
thanks to Prop. \ref{l:H2} and \ref{l:H3}, one can conclude by means of Ascoli theorem 
that, up to extraction, $v^\ep\to v$ strongly in $\cC([t_0,T];L_p)$ for all $p<6$ (and even $p<\infty$ if $d=2$).
Combining with \eqref{eq:rho} ensures that $\sqrt\rho v$ is in $\cC(]0,+\infty[;L_2)$ and
that one can pass to the limit in all the terms of \eqref{eq:eneep}: we eventually get 
\begin{equation}\label{eq:enebis}\|\sqrt{\rho(t)}\,v(t)\|_2^2+2\int_{t_0}^t\|\nabla v\|_2^2\,d\tau= \|\sqrt{\rho(t_0)}\,v(t_0)\|_2^2\quad\hbox{for all }\ t,t_0\geq0.\end{equation}
Finally, to get the  strong continuity of  $\sqrt \rho v$ at $t=0,$ we notice that the uniform bounds ensure
the weak continuity, and that \eqref{eq:enebis} gives
 $$
\|\sqrt{\rho(t)}\,v(t)\|_2^2\longrightarrow \|\sqrt{\rho_0}\,v_0\|_2^2\quad\hbox{for }\ t\to0.
$$
This completes the proof of the existence part of Theorems \ref{thm:d=2} and \ref{thm:d=3}.\qed

\section{The proof of uniqueness}\label{s:uniqueness}

As there is no hope to prove  uniqueness of solutions at the level of the  Eulerian coordinates system, owing to the lack 
of regularity of the density,  we shall prove it for the solutions written in the Lagrangian 
coordinates.
\medbreak
To this end, we introduce the flow
$X:\R_+\times\T^d\to\T^d$  of $v,$ that is   the unique solution to \eqref{l1}. 
In Lagrangian coordinates $(t,y)$, a solution $(\rho,v,P)$ to $(INS)$ recasts in $(\eta,u,Q)$ with
\begin{equation}\label{l2}
 \eta(t,y):=\rho(t,X(t,y)), \qquad u(t,y):=v(t,X(t,y))\quad\hbox{and}\quad Q(t,y):=P(t,X(t,y)).
\end{equation}
Note that
$$
X(t,y)=y+\int_0^t v(\tau,X(\tau,y))\,d\tau=\int_0^t u(\tau,y)\,d\tau,
$$
 and thus 
 $$
 \nabla_yX(t,y)=\Id+\int_0^t \nabla_yu(\tau,y)\,d\tau.
 $$
 Let $A(t):=(\nabla_yX(t,\cdot))^{-1}.$   In the $(t,y)$-coordinates, operators $\nabla,$  $\div$
 and $\Delta$  translate into 
\begin{equation}\label{l4}
 \nabla_{u}:={}^T\!A\, \nabla_y, \quad \divu := {}^T\!A:\nabla_y =\divy(A \cdot)\
 \hbox{ and }\ \Delta_u:=\divy(A\,{}^T\!A\nabla_y\cdot),
\end{equation}
and  the triplet  $(\eta,u,Q)$ thus satisfies
\begin{equation}\label{LNS}
 \begin{array}{lcr}
  \displaystyle \eta_t=0 & \mbox{in} & (0,T)\times \T^d,\\
\displaystyle \eta u_t - \Delta_u u +\nabla_u Q =0 & \mbox{in} & (0,T)\times \T^d,\\
\displaystyle  \divu u =0 & \mbox{in} & (0,T)\times \T^d.
 \end{array}
\end{equation}
As pointed out in e.g.  \cite{DM-cpam,DM-arma},
in our regularity framework, that latter system is equivalent to $(INS)$ whenever, say, 
 \begin{equation}\label{eq:smallDu}
 \int_0^T\|\nabla u\|_{\infty}\,d\tau\leq\frac12\cdotp
 \end{equation}
Of course, if that condition is fulfilled then one may write that 
 \begin{equation}\label{eq:A}
 A=(\Id+(\nabla_yX-\Id))^{-1}=\sum_{k=0}^{+\infty}(-1)^k\biggl(\int_0^t \nabla_yu(\tau,\cdot)\,d\tau\biggr)^k.
 \end{equation}
Let us tackle  the proof of uniqueness in  the torus case (the bounded domain case goes exactly the same : just change Lemma \ref{l:div} to  Lemma \ref{l:div-dom}). 
\medbreak
Consider two solutions $(\rho^1,v^1,P^1)$ and $(\rho^2,v^2,P^2)$ of $(INS)$, 
emanating from  the same initial data, and fulfilling the properties of Theorems \ref{thm:d=2} and \ref{thm:d=3},
 and denote by $(\eta^1,u^1,Q^1)$ and $(\eta^2,u^2,Q^2)$ the corresponding  triplets in Lagrangian coordinates. 
 Of course,  we have $\eta^1=\eta^2=\rho_0$, which explains the choice of our approach here. 
  In what follows, we shall use repeatedly the fact that for $i=1,2,$ we have (recall \eqref{t18})
  \begin{multline}\label{eq:regularity}
 \qquad t^{1/2}\nabla u^i\in L_2(0,T;L_\infty),\quad
  \nabla u^i\in L_1(0,T;L_\infty)\cap L_4(0,T;L_3)\cap L_2(0,T;L_6),\\
  u^i\in L_4(0,T;L_\infty),\quad t^{1/2}\nabla Q^i\in L_2(0,T;L_4)\ \hbox{ and }\ 
 t^{1/2}u_t^i\in L_{4/3}(0,T;L_6).\qquad
 \end{multline}
Denoting  $\du:=u^2-u^1$ and $\dQ:=Q^2-Q^1,$  we get 
\begin{equation}\label{NSL-uniq}
 \begin{aligned}
  &{\rho_0}\du_t - \Delta_{u^1}\du + \nabla_{u^1}\dQ= 
(\Delta_{u^2} -\Delta_{u^1}) u^2 - (\nabla_{u^2}-\nabla_{u^1}) Q^2,\\
&\div_{\!u^1}\du=(\div_{\!u^1}-\div_{\!u^2})u^2,\\
&\du|_{t=0}=0.
\end{aligned}
\end{equation}
We claim that  for sufficiently small $T>0,$  we have 
$$
\int_0^T\int_{\T^d}\bigl(|\du(t,y)|^2+|\nabla\du(t,y)|^2\bigr)dy\,dt=0.
$$
To prove our claim,  decompose $\du$ into 
\begin{equation}\label{l5}
 \du = w+z,
\end{equation}
where $w$ is  the solution given by Lemma \ref{l:div} to the following problem:
\begin{equation}\label{l6}
 \div_{\!u^1} w = (\div_{\!u^1}-\div_{\!u^2})u^2=\div (\dA \, u^2)= {}^T\!\dA: \nabla u^2,
\end{equation}
with $\dA:=A^2-A^1$ and $A^i:=A(u^i).$
\medbreak
As a first step in the proof of our claim, let us establish the following lemma:
 \begin{lem}
The solution $w$ to \eqref{l6} given by Lemma \ref{l:div} satisfies 
\begin{equation}\label{eq:w}
\|w\|_{L_4(0,T;L_2)}+ \|\nabla w\|_{L_2(0,T\times\T^d)} + \|w_t\|_{L_{4/3}(0,T;L_{3/2})} \leq c(T)\|\nabla\du\|_{L_2(0,T\times\T^d)}
\end{equation}
with $c(T)$ going to $0$ when $T$ tends to $0.$
 \end{lem}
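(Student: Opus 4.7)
My plan is to apply Lemma~\ref{l:div} (the twisted divergence solver) to equation \eqref{l6}. The key idea is that for short time, $\dA$ is essentially the time integral of $\nabla\du$, hence small in every reasonable norm, and Lemma~\ref{l:div} transfers this smallness to $w.$

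\medskip
\textbf{Step 1: A tractable formula for $\dA.$} I first rewrite
$$\dA = A^2 - A^1 = -A^2\Bigl(\int_0^t \nabla_y \du(\tau,\cdot)\,d\tau\Bigr)A^1,$$
using the identity $B^{-1}-C^{-1}=-B^{-1}(B-C)C^{-1}$ together with $\nabla_y X^i = \Id + \int_0^t \nabla u^i$. By \eqref{eq:regularity}, $\nabla u^i \in L_1(0,T;L_\infty)$, so for $T$ small enough $\|\int_0^t \nabla u^i\|_\infty \le 1/2$, which makes the series \eqref{eq:A} converge and yields $\|A^i\|_{L_\infty((0,T)\times\T^d)}\le 2.$ Consequently $\|\dA(t,\cdot)\|_{L_p} \le C\|\int_0^t \nabla\du(\tau,\cdot)\,d\tau\|_{L_p}$ for any $p$, and in particular
$$\|\dA\|_{L_\infty(0,T;L_2)} \le C\,T^{1/2}\|\nabla\du\|_{L_2(0,T\times\T^d)}.$$

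\medskip
\textbf{Step 2: Estimates on $\dA u^2.$} Using H\"older in time and the regularity $u^2\in L_4(0,T;L_\infty)$ from \eqref{eq:regularity}, I would get
$$\|\dA\, u^2\|_{L_2(0,T;L_2)} \le \|\dA\|_{L_4(0,T;L_2)}\|u^2\|_{L_4(0,T;L_\infty)}
\le c(T)\,\|\nabla\du\|_{L_2(0,T\times\T^d)},$$
with $c(T)\to0$ as $T\to0$ (the extra factor $T^{1/4}$ coming from the time-H\"older step). Similar interpolations furnish bounds on $\dA\, u^2$ in $L_\infty(0,T;L_2)$ and in $L_{4/3}(0,T;L_{3/2}).$

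\medskip
\textbf{Step 3: Transferring to $w$ via Lemma~\ref{l:div}.} Writing $g={}^T\dA:\nabla u^2 = \div_y(\dA\, u^2)$, the lemma produces $w$ with
$$\|\nabla w\|_{L_p} \lesssim \|\dA\, u^2\|_{L_p}, \qquad \|w\|_{L_p}\lesssim \|\dA\, u^2\|_{L_p},$$
(since $A^1$ is close to $\Id$, the implicit constants are uniform on $[0,T]$). Applying this pointwise in $t$ and integrating gives the first two bounds: $\|\nabla w\|_{L_2(0,T\times\T^d)}\le c(T)\|\nabla\du\|_{L_2}$ directly, and $\|w\|_{L_4(0,T;L_2)}\le T^{1/4}\|w\|_{L_\infty(0,T;L_2)}\le T^{1/4}\|\dA u^2\|_{L_\infty(0,T;L_2)}\le c(T)\|\nabla\du\|_{L_2}.$

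\medskip
\textbf{Step 4: The time-derivative bound, and the main difficulty.} Differentiating \eqref{l6} in time and applying Lemma~\ref{l:div} to $w_t$ (after accounting for the commutator $[\partial_t,\div_{u^1}]$, which brings the extra term $\partial_t A^1 \cdot \nabla w = \nabla u^1 \cdot \nabla w$, already controlled by Step 3 together with $\nabla u^1\in L_2(0,T;L_6)$), one finds that $w_t$ is controlled by $\partial_t(\dA\, u^2) = \partial_t\dA\cdot u^2 + \dA\cdot \partial_t u^2$ in $L_{4/3}(0,T;L_{3/2}).$ The first piece $\partial_t \dA = -A^2(\nabla \du)A^1 + (\text{lower order})$ yields $\|\partial_t\dA\cdot u^2\|_{L_{4/3}(0,T;L_{3/2})}$ bounded via $\|\nabla \du\|_{L_2(0,T;L_2)}\,\|u^2\|_{L_4(0,T;L_6)}$ and H\"older. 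The \emph{hardest term} is $\dA\cdot \partial_t u^2$, because $\partial_t u^2$ is only in $L_{4/3}(0,T;L_6)$ after multiplication by $t^{1/2}$; here the $t^{-1/2}$ weight must be absorbed by the smallness coming from $\dA = O(t^{1/2}\|\nabla\du\|_{L_2})$. Pairing $\|\dA(t)\|_{L_2}\le Ct^{1/2}\|\nabla\du\|_{L_2}$ with $\|t^{1/2}\partial_t u^2\|_{L_{4/3}(0,T;L_6)}$ via H\"older in space (into $L_{3/2}$) and in time gives the required $c(T)\|\nabla\du\|_{L_2}$ bound, completing \eqref{eq:w}.
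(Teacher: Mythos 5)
Your proposal follows essentially the same route as the paper's proof: express $\dA$ through $\int_0^t\nabla\du\,d\tau$, deduce the pointwise bound $\|\dA(t)\|_2\lesssim t^{1/2}\|\nabla\du\|_{L_2(0,t\times\T^d)}$, and feed $R=\dA\, u^2$, $g={}^T\!\dA:\nabla u^2$ into Lemma \ref{l:div}; in particular your treatment of the hardest term $\dA\, u^2_t$ (pairing the $t^{1/2}$ gain on $\dA$ against $t^{1/2}u^2_t\in L_{4/3}(0,T;L_6)$) is exactly the paper's. Two of your H\"older pairings, however, invoke norms that \eqref{eq:regularity} does not supply: $u^2\in L_\infty(0,T;L_\infty)$ (hidden in the chain $\|w\|_{L_4(0,T;L_2)}\le T^{1/4}\|\dA\,u^2\|_{L_\infty(0,T;L_2)}$) and, implicitly, $\nabla u^2\in L_2(0,T;L_\infty)$ (in the ``direct'' gradient bound). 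Both must be replaced by the weighted pairings $\|t^{-1/2}\dA\|_{L_\infty(0,T;L_2)}\|t^{1/2}u^2\|_{L_4(0,T;L_\infty)}$ and $\|t^{-1/2}\dA\|_{L_\infty(0,T;L_2)}\|t^{1/2}\nabla u^2\|_{L_2(0,T;L_\infty)}$, which your Step 1 already makes available; this is how the paper proceeds. Finally, Step 4's re-differentiation of the divergence equation (with its commutator) is unnecessary and as written the commutator identity is not quite right: Lemma \ref{l:div} directly provides $\|w_t\|_{L_{4/3}(0,T;L_{3/2})}\lesssim\|\dA\,u^2\|_{L_4(0,T;L_2)}+\|(\dA\,u^2)_t\|_{L_{4/3}(0,T;L_{3/2})}$, after which your estimates of $\dA_t\,u^2$ and $\dA\,u^2_t$ coincide with the paper's. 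One should also verify the hypothesis $\|A_t\|_{L_2(0,T;L_6)}\le c$ of Lemma \ref{l:div}, which follows from $\nabla u^1\in L_2(0,T;L_6)$ for $T$ small.
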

\begin{proof} 
Lemma \ref{l:div} and Identity \eqref{eq:A} ensure that  there exist two universal positive constants $c$ and $C$ such that if  
\begin{equation}\label{eq:smallu}
\|\nabla u^1\|_{L_1(0,T;L_\infty)}+\|\nabla u^1\|_{L_2(0,T;L_6)}\leq c, 
\end{equation}
then the following inequalities hold true:
\begin{equation}\label{eq:www}\begin{array}{c}
\|w\|_{L_4(0,T;L_2)}\leq C\|\dA \,u^2\|_{L_4(0,T;L_2)}, \quad
\|\nabla w\|_{L_2(0,T;L_2)}\leq C\|{}^T\!\dA:\nabla u^2\|_{L_2(0,T;L_2)}\\[1ex] \hbox{\rm and }\ 
\|w_t\|_{L_{4/3}(0,T;L_{3/2})}\leq C\|\dA\, u^2\|_{L_{4}(0,T;L_{2})}+C\|(\dA \,u^2)_t\|_{L_{4/3}(0,T;L_{3/2})}.
\end{array}\end{equation}
In all that follows, $c(T)$ designates a nonnegative continuous increasing function of $T,$ with $c(0)=0.$
 Now, let us bound the r.h.s. of \eqref{eq:www}. 
 Regarding  ${}^T\!\dA:\nabla u^2,$  one can  use the fact that
 if both $u^1$ and $u^2$ fulfill \eqref{eq:smallu}, then we have
\begin{equation}\label{l128}
 \sup_{t\in[0,T]}  \|t^{-1/2} \dA\|_2 \leq C\sup_{t\in[0,T]}\Big\|t^{-1/2}\int_0^t \nabla \du\,d\tau\Bigr\|_{2}
  \leq C\|\nabla\du\|_{L_2(0,T;L_2)}.
\end{equation}
This  stems from H\"older inequality and the following identity: 
\begin{equation}\label{eq:dA}
\dA(t)=\biggl(\int_0^t \nabla \du\,d\tau\biggr)\cdot
\biggl(\sum_{k\geq1}\sum_{0\leq j<k} C_1^jC_2^{k-1-j}\biggr)
\quad\hbox{with}\quad
C_i(t):=\int_0^t \nabla u^i\,d\tau.
\end{equation}
Therefore, thanks to \eqref{eq:regularity} and \eqref{l128}, we have
$$\begin{aligned}
\|{}^T\!\dA:\nabla u^2\|_{L_2(0,T\times\T^d)}&\leq 
 \sup_{t\in[0,T]}  \|t^{-1/2} \dA(t)\|_2\|t^{1/2}\nabla u^2\|_{L_2(0,T;L_\infty)}\\ &\leq  c(T)\|\nabla\du\|_{L_2(0,T;L_2)}.
 \end{aligned}$$
 Similarly,  
$$ \|\dA\, u^2\|_{L_4(0,T;L_2)}\\
 \leq \|t^{-1/2}\dA\|_{L_\infty(0,T;L_2)}\|t^{1/2}u^2\|_{L_4(0,T;L_\infty)},$$
 whence,  using  \eqref{eq:regularity},   \eqref{eq:www} and \eqref{l128} gives 
 \begin{equation}\label{l9} \| w\|_{L_4(0,T;L_2)} 
  \leq c(T)\|\nabla\du\|_{L_2(0,T;L_2)}.
\end{equation}
In order to bound $w_t,$  it suffices to derive an appropriate estimate 
in $L_{4/3}(0,T;L_{3/2})$ for 
$$(\dA\,u^2)_t=\dA\,u^2_t+(\dA)_t\,u^2.$$
Thanks to \eqref{eq:regularity} and \eqref{l128} 
$$
\begin{aligned}
\|\dA\,u^2_t\|_{L_{4/3}(0,T;L_{3/2})}&\leq \|t^{-1/2}\dA\|_{L_\infty(0,T;L_2)}\|t^{1/2}u^2_t\|_{L_{4/3}(0,T;L_6)}\\
&\leq c(T)\|\nabla\du\|_{L_2(0,T;L_2)}.
\end{aligned}
$$
One can bound the other term as follows: 
$$
\|\dA_t\,u^2\|_{L_{4/3}(0,T;L_{3/2})}\leq
\|\dA_t\|_{L_2(0,T\times \T^d)}\|u^2\|_{L_4(0,T;L_6)}.
$$
Differentiating \eqref{eq:dA} with respect to $t$ and using \eqref{eq:smallu} for $u^1$ and $u^2,$ 
we see that
$$
\|\dA_t\|_{2}\leq C\biggl(\|\nabla\du\|_{2}+\Big\|t^{-1/2}\int_0^t \nabla \du\,d\tau\Big\|_2
\bigl(\|t^{1/2}\nabla u^1\|_\infty+\|t^{1/2}\nabla u^2\|_\infty\bigr)\biggr)\cdotp
$$
Therefore 
$$\|\dA_t\|_{L_2(0,T\times \T^d)}\leq C\|\nabla\du\|_{L_2(0,T\times\T^d)},$$ and thus, owing to \eqref{eq:regularity}, 
$$\|\dA_t\,u^2\|_{L_{4/3}(0,T;L_{3/2})}\leq c(T)\|\nabla\du\|_{L_2(0,T\times\T^d)}.$$
Altogether, this  gives \eqref{eq:w}. 
\end{proof}

\medbreak
Next, let us restate the equations for $(\du,\dQ)$ as the following system for $(z,\dQ)$: 
\begin{equation}\label{l21}
 \begin{aligned}
&{\rho_0} z_t -\Delta_{u^1} z + \nabla_{u^1}\dQ = 
 (\Delta_{u^2}-\Delta_{u^1})u^2+(\nabla_{u^1}-\nabla_{u^2})Q^2
   - {\rho_0} w_t + \Delta_{u^1} w, \\
&\div_{\!u^1} z=0.
 \end{aligned}
\end{equation}
Let us test the equation by $z.$ We first notice the following  crucial property
thanks to which one does not have to care about the difference of the pressures:
\begin{equation}\label{l22}
 \int_{\T^d} (\nabla_{u^1} \dQ)\cdot z \, dx = -\int_{\T^d} \div_{\!u^1} z \, \dQ\, dx =0.
\end{equation}
So we have
\begin{equation}\label{l23}
 \frac 12 \frac{d}{dt} \int_{\T^d} {\rho_0} |z|^2 \,dx + \int_{\T^d} |\nabla_{u^1} z|^2 dx = \sum_{k=1}^4 I_k,
\end{equation}
where
$$
\begin{array}{lll}
 &I_1:= \Int_{\T^d} \bigl((\Delta_{u^2} -\Delta_{u^1}) u^2\bigr) \cdot z \,dx,  \quad 
&I_2:= \Int_{\T^d}  ((\nabla_{u^1}-\nabla_{u^2})Q^2)\cdot z \,dx,\\[1ex]
 &I_3:= - \Int_{\T^d} \, {\rho_0} \, w_t \cdot z \,dx, \quad &I_4: = \Int_{\T^d} (\Delta_{u^1} w)\cdot z\, dx.
\end{array}
$$
We have, using  \eqref{l4}, \eqref{eq:A} and \eqref{eq:smallu},
$$
\begin{aligned}
 I_1 &=  \int_{\T^d} \div\bigl((\dA\, {}^T\!A_2+ A_1 \,{}^T\!\dA)\nabla u^2\bigr)\cdot z\, dx  \\
 &\leq \int_{\T^d} \bigl|\dA\,{}^T\!A_2 + A_1\,\!{}^T\! \dA\bigr|\:  |\nabla u^2 |\: |\nabla z| \,dx \leq C \|t^{-1/2} \dA\|_2 \|t^{1/2} \nabla u^2\|_\infty \|\nabla z\|_2.
\end{aligned}
$$
Therefore, thanks to \eqref{eq:regularity} and \eqref{l128},
\begin{eqnarray}\label{l27}
 \int_0^T I_1(t)\,dt &&\!\!\!\!\!\!\!\!\!\!\leq C \|t^{-1/2} \dA\|_{L_\infty(0,T;L_2)}
  \|t^{1/2}\nabla u^2\|_{L_2(0,T;L_\infty)}  \|\nabla z\|_{L_2(0,T\times\T^d)}\nonumber\\
  &&\!\!\!\!\!\!\!\!\!\!\leq c(T)  \|\nabla\du\|_{L_2(0,T\times\T^d)}  \|\nabla z\|_{L_2(0,T\times\T^d)}.
\end{eqnarray}
Next, 
\begin{equation}\label{l29}
 I_2 \leq  \bigg|\int_{\T^d} \dA \nabla Q^2 \cdot z \,dx \bigg|\leq C\|t^{-1/2} \dA\|_2 \|t^{1/2} \nabla Q^2\|_4 \|z\|_4, 
\end{equation}
whence, according to \eqref{l128} and Sobolev embedding
$$
\begin{aligned}
 \int_0^T I_2(t)\,dt &\leq \|t^{-1/2}\dA\|_{L_\infty(0,T;L_2)}  \|t^{1/2} \nabla Q^2\|_{L_2(0,T;L_4)}
  \|z\|_{L_2(0,T;L_4)}\\
&\leq c(T) \|\nabla \du\|_{L_2(0,T\times\T^d)}  \|z\|_{L_2(0,T;H^1)}.
 \end{aligned}
 $$
 At this stage, one may use Lemma \ref{l:poincare} to bound $z$ in $H^1$: 
 we get, for some constant $C$ depending only on $\rho_0,$
 \begin{equation}\label{eq:zH1}
  \|z\|_{H^1}\leq C\bigl(\|\sqrt{{\rho_0}} z\|_2+\|\nabla z\|_2\bigr).
  \end{equation}
    Therefore,  one  concludes  that
 $$
  \int_0^T I_2(t)\,dt  \leq c(T)\left(\|\sqrt{\rho_0}\, z\|_{L_\infty(0,T;L_2)}
  +\|\nabla z\|_{L_2(0,T\times\T^d)}\right)
  \|\nabla \du\|_{L_2(0,T\times\T^d)}\cdotp
 $$
 Next, using H\"older inequality, one can write that
 $$\int_0^T I_3(t) \, dt \leq \|{\rho_0}\|_{\infty}^{3/4} \|w_t\|_{L_{4/3}(0,T;L_{3/2})} \|{\rho_0}^{1/4}z\|_{L_{4}(0,T;L_{3})}.$$
Note that  from  H\"older inequality and the Sobolev embedding $H^1(\T^d)\hookrightarrow L_6(\T^d),$ we have
$$
 \|{\rho_0}^{1/4}z\|_{L_{4}(0,T;L_{3})}\leq \|\sqrt{\rho_0}\, z\|_{L_\infty(0,T;L_2)}^{1/2}\|z\|_{L_2(0,T;L_6)}^{1/2}
 \leq C \|\sqrt{\rho_0}\, z\|_{L_\infty(0,T;L_2)}^{1/2}\|z\|_{L_2(0,T;H^1)}^{1/2}.$$
   Then,  taking advantage of \eqref{eq:zH1} and \eqref{eq:w}, we conclude that 
  $$
 \int_0^T\!\! I_3(t) \, dt \leq c(T)\left(\|\sqrt{\rho_0}\, z\|_{L_\infty(0,T;L_2)}
  \!+\!\|\nabla z\|_{L_2(0,T\times\T^d)}\right)^{\!1/2}\! \|\sqrt{\rho_0}\, z\|_{L_\infty(0,T;L_2)}^{1/2}
  \|\nabla \du\|_{L_2(0,T\times\T^d)}\cdotp
  $$
Finally, integrating by parts, and using \eqref{eq:w} and \eqref{eq:smallu},
$$\begin{aligned}
 \int_0^T I_4(t)\,dt &\leq \int_0^T  \int_{\T^d} |\nabla_{u^1} w|\, | \nabla_{u^1} z| \,dx \, dt\\
  &\leq    \frac12\int_0^T \|\nabla_{u^1} z\|_2^2\, dt+ \frac12\int_0^T \|\nabla_{u^1} w\|_2^2\, dt\\
   &\leq   \frac12\int_0^T \|\nabla_{u^1} z\|_2^2\, dt+c(T)\int_0^T \|\nabla \du\|_2^2\,dt.
\end{aligned}$$
So altogether, this gives for all small enough $T>0,$
\begin{equation}\label{l33}
\sup_{t\in[0,T]} \|\sqrt{{\rho_0}} z(t)\|_2^2 + \int_0^T \|\nabla z\|_2^2\, dt  \displaystyle \leq  c(T)\int_0^T \|\nabla \du\|^2_2 \,dt.
\end{equation}
Combining with \eqref{eq:w}, we conclude that
$$
\int_0^T \|\nabla \du\|^2_2 \,dt\leq c(T)\int_0^T \|\nabla \du\|^2_2 \,dt.
$$
Hence $\nabla\du\equiv0$ on $[0,T]\times\T^d$ if $T$ is small enough.
\medbreak
Then, plugging that information in \eqref{l33} yields 
$$\|\sqrt{\rho_0}\, z\|_{L_\infty(0,T;L_2)}+\|\nabla z\|_{L_2(0,T\times\T^d)}=0.$$ 
Combining with
Lemma \ref{l:poincare} finally implies that $z\equiv0$ 
on $[0,T]\times\T^d,$ and \eqref{eq:w} clearly  yields $w\equiv0.$
 Therefore we proved that for small enough $T>0,$ 
 $$ u^1\equiv u^2\quad\hbox{on }\ [0,T]\times\T^d.$$ 
 Reverting to Eulerian coordinates, we conclude that  the two solutions of $(INS)$  coincide 
 on $[0,T]\times\T^d.$ 
  Then  standard connectivity arguments yield uniqueness on the whole $\R_+.$


\section{Appendix}

We here establish  the weighted Poincar\'e inequalities and  
the   Desjardins interpolation inequality \eqref{d0} that have been used several times in the proof of existence,  
and    results  for the `twisted' divergence equation that were the key to the proof of uniqueness. 
\medbreak
Let us start with the Poincar\'e inequalities in the case where $\Omega$ is the unit torus $\T^d.$ 
\begin{lem}\label{l:poincare}
Let $a:(0,1)^d\to\R$ be a nonnegative and nonzero measurable function. 
Then we have for all $z$ in $H^1(\T^d),$ 
\begin{equation}\label{eq:poincare}
\|z\|_{2}\leq \frac1M\biggl|\int_{\T^d}az\,dx\biggr|+\biggl(1+\frac1M\|M-a\|_2\biggr)\|\nabla z\|_2
\quad\hbox{with}\quad M:=\int_{\T^d} a\,dx.
\end{equation}
Furthermore, in dimension $d=2,$  there exists an absolute constant $C$ so that 
\begin{equation}\label{eq:limitpoincare}
\|z\|_{2}\leq    \frac1M\biggl|\int_{\T^2}az\,dx\biggr|+C\log^{\frac12}\biggl(e+\frac{\|a-M\|_{2}}{M}\biggr)\|\nabla z\|_{2}.
 \end{equation}
\end{lem}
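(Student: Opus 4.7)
Both inequalities are proved by the same decomposition strategy; the refinement in \eqref{eq:limitpoincare} relies on a $2$D-specific interpolation.

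For \eqref{eq:poincare}, denote $\bar z := \int_{\T^d} z\,dx$ the mean of $z$, so by Poincar\'e $\|z - \bar z\|_2 \leq \|\nabla z\|_2$. Since $\int_{\T^d} (a-M)\,dx = 0$, we have
$$\int_{\T^d} a z\,dx = M\bar z + \int_{\T^d}(a-M)(z-\bar z)\,dx,$$
and Cauchy--Schwarz yields $M|\bar z| \leq |\int_{\T^d} az\,dx| + \|a-M\|_2\|\nabla z\|_2$. Combining with the triangle inequality $\|z\|_2 \leq |\bar z| + \|z - \bar z\|_2$ gives \eqref{eq:poincare} at once. This is exactly the same pattern already used in the derivation of \eqref{eq:vt2} in the proof of Proposition \ref{l:H2}.

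For \eqref{eq:limitpoincare}, the same reduction leaves us with the task of proving
$$\Big|\int_{\T^2}(a-M)(z-\bar z)\,dx\Big| \leq CM\log^{1/2}\bigl(e+\|a-M\|_2/M\bigr)\|\nabla z\|_2,$$
which strictly improves the trivial Cauchy--Schwarz bound $\|a-M\|_2\|\nabla z\|_2$ precisely when $\|a-M\|_2 \gg M$. The $2$D ingredient will be the Trudinger--Moser inequality, in the form $\|w\|_p \leq C\sqrt{p}\,\|\nabla w\|_2$ for every $p\geq 2$ and every mean-free $w \in H^1(\T^2)$ (this also follows directly from Gagliardo--Nirenberg $\|w\|_p \lesssim \sqrt{p-1}\,\|w\|_2^{2/p}\|\nabla w\|_2^{1-2/p}$ and Poincar\'e). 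Applying H\"older with conjugate exponent $p' = p/(p-1)$, and then the standard interpolation $\|b\|_{p'} \leq \|b\|_1^{1-2/p}\|b\|_2^{2/p}$ to $b:=a-M$, one gets
$$\Big|\int_{\T^2}(a-M)(z-\bar z)\,dx\Big| \leq C\sqrt{p}\,\|a-M\|_1^{1-2/p}\|a-M\|_2^{2/p}\|\nabla z\|_2.$$
Choosing $p = 2\log(e+\|a-M\|_2/\|a-M\|_1)$ when this is at least $2$, and $p=2$ otherwise, yields the bound
$$\Big|\int_{\T^2}(a-M)(z-\bar z)\,dx\Big|\leq C\|a-M\|_1\,\log^{1/2}\bigl(e+\|a-M\|_2/\|a-M\|_1\bigr)\|\nabla z\|_2.$$

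The last step is to replace $\|a-M\|_1$ by $M$ in the prefactor: the hypothesis $a\geq 0$ together with $\int_{\T^2} a\,dx = M$ forces $\|a-M\|_1 \leq 2M$, and a direct computation shows that the map $y \mapsto y\log^{1/2}(e+K/y)$ is strictly increasing in $y>0$ for every fixed $K\geq 0$. Hence the bound obtained with $\|a-M\|_1$ is dominated, up to an absolute multiplicative constant, by the same expression with $2M$ in place of $\|a-M\|_1$, which gives \eqref{eq:limitpoincare}. The delicate point of the whole argument is thus this optimization in $p$ combined with the monotonicity observation, since a naive use of Cauchy--Schwarz or of pure Poincar\'e cannot produce a log factor of the required form. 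Everything else reduces to routine triangle inequality and H\"older manipulations.
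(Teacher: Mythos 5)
Your proof is correct. The first inequality is obtained exactly as in the paper (mean decomposition, the identity $\int az = M\bar z + \int(a-M)(z-\bar z)$, Cauchy--Schwarz and Poincar\'e), but for \eqref{eq:limitpoincare} you take a genuinely different route. The paper splits $\wt z := z-\bar z$ into low and high Fourier modes: the low-frequency part $\wt z_n$ is bounded in $L_\infty$ by $C\sqrt{\log n}\,\|\nabla z\|_2$ via Cauchy--Schwarz on the lattice sum $\sum_{1\le|k|\le n}|k|^{-2}\approx\log n$, the tail is bounded in $L_2$ by $n^{-1}\|\nabla z\|_2$, and one optimizes $n\approx\|a-M\|_2/M$. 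You instead use H\"older with a large exponent $p$, the Trudinger--Moser growth $\|z-\bar z\|_p\le C\sqrt p\,\|\nabla z\|_2$, interpolation of $\|a-M\|_{p'}$ between $L_1$ and $L_2$, and optimize in $p$; the nonnegativity of $a$ then enters only through $\|a-M\|_1\le 2M$ together with the monotonicity of $y\mapsto y\log^{1/2}(e+L/y)$ (which is correctly verified: its derivative equals $\log^{-1/2}(e+u)\bigl[\log(e+u)-\tfrac{u}{2(e+u)}\bigr]>0$ with $u=L/y$). Your route is essentially the one of Desjardins' original argument, which the paper explicitly says it is replacing by the frequency decomposition; what the paper's version buys is a fully self-contained elementary proof that is reused verbatim for Lemma \ref{DLest}, while yours is more portable to settings where Fourier series are unnatural but a Trudinger/Gagliardo--Nirenberg bound with $\sqrt p$ growth is available, and it isolates cleanly where $a\ge0$ is used. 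One cosmetic remark: since $K\ge0$ implies $2\log(e+K)\ge2$, the case distinction ``and $p=2$ otherwise'' is vacuous; and the degenerate case $\|a-M\|_1=0$ should be noted separately (the term to be estimated then vanishes).
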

\begin{proof}
To prove \eqref{eq:poincare}, we start with the obvious inequality
\begin{equation}\label{eq:z}\|z\|_2\leq|\bar z|+\|\nabla z\|_2\quad\hbox{with}\quad\bar z:=\int_{\T^d}z\,dx,\end{equation}
then we use the fact that
\begin{equation}\label{eq:Mz}
M\bar z=\int_{\T^d} az\,dx +\int_{\T^d}(M-a)(z-\bar z)\,dx,
\end{equation}
which, according to  the classical Cauchy-Schwarz and Poincar\'e inequalities in the torus, implies that 
\begin{equation}\label{eq:mean}
M|\bar z|\leq \biggl|\int_{\T^d} az\,dx\biggr| +\|M-a\|_2\|\nabla z\|_2.
\end{equation}
To prove Inequality \eqref{eq:limitpoincare},  we decompose $\wt z:=z-\bar z$ 
 into Fourier series:
$$
\wt z(x)=\sum_{k\in\Z^2\setminus\{(0,0)\}} \wh z_k\, e^{2i\pi k\cdot x}.
$$
Then, for any integer $n,$ we  set
$$
\wt z_n(x):=\sum_{1\leq|k|\leq n}  \wh z_k\, e^{2i\pi k\cdot x}.
$$
 Because the average of $\wt z_n$ is $0,$   one may write  for any  integer $n,$
$$
\int_{\T^2}(a -M)\wt z\,dx=\int_{\T^2}   a\wt z_n\,dx+\int_{\T^2}(a-M)  (\wt z-\wt z_n)\,dx.
$$
Therefore, using H\"older inequality,
\begin{equation}\label{eq:p1}
\biggl| \int_{\T^2} (a -M)\wt z\,dx\biggr|\leq M\|\wt z_n\|_{\infty}
+\|a-M\|_{2}\|\wt z-\wt z_n\|_{2}.
\end{equation}
By Cauchy-Schwarz inequality, we have for all $x\in\T^2,$
\begin{eqnarray}\label{eq:des2}
| \wt z_n(x)|&&\!\!\!\!\!\!\leq \sum_{1\leq|k|\leq n}|2\pi k\wh z_k|\, \frac{|e^{2i\pi k\cdot x}\bigr|}{2\pi|k|}\nonumber\\
&&\!\!\!\!\!\!\leq \|\nabla z\|_{2}\biggl(\sum_{1\leq |k|\leq n} \frac1{4\pi^2|k|^{2}}\biggr)^{1/2}\nonumber\\
&&\!\!\!\!\!\!\leq C\sqrt{\log n} \, \|\nabla z\|_{2}.
\end{eqnarray}
Inserting that inequality in \eqref{eq:p1} yields 
$$\biggl| \int_{\T^2} (a -M)\wt z\,dx\biggr|\leq C\Bigl(\sqrt {\log n}\:M+n^{-1}\|a-M\|_{2}\Bigr)\|\nabla z\|_{2}.
$$
Then taking $n\approx\|a-M\|_{2}/M$ and mingling with 
\eqref{eq:z} and \eqref{eq:Mz} gives \eqref{eq:limitpoincare}.
\end{proof}
\medbreak
Let us now prove   Lemma \ref{DLest}.  As above, it  is based on decomposition into low 
and high frequency, while the  original proof by B. Desjardins in \cite{Des-CPDE} relies on Trudinger inequality. 
The starting point is that for any  $n\in\N^*,$  we have
$$\begin{aligned}
\int_{\T^2}\rho z^4\,dx &=\int_{\T^2} \rho z^2 \bigl(\bar z+ \wt z_n+(\wt z-\wt z_n)\bigr)^2\,dx\\&\leq
3\biggl(\bar z^2\|\sqrt\rho z\|_{2}^2+\|\wt z_n\|_{\infty}^2\|\sqrt\rho z\|_{2}^2
+\sqrt{\rho^*}\|\wt z-\wt z_n\|_{4}^2\biggl(\int_{\T^2}\rho z^4\,dx\biggr)^{\frac12} \biggr)\cdotp
\end{aligned}
$$
We thus have, using Young inequality and embedding $\dot H^{\frac12}\hookrightarrow L^4,$
\begin{equation}\label{eq:des1}
\int_{\T^2}\rho z^4\,dx \leq 6\|\sqrt\rho z\|_{2}^2\bigl(\bar z^2+\|\wt z_n\|_{\infty}^2\bigr)
+C\rho^*\|\wt z-\wt z_n\|_{\dot H^{\frac12}}^4.
\end{equation}
Obviously,  we have
\begin{equation}\label{eq:des3}
\|\wt z-\wt z_n\|_{\dot H^{\frac12}}\leq \frac1{\sqrt n}\|\nabla z\|_{2}.\end{equation}
Hence, using \eqref{eq:des2} to bound the first term of the right-hand side of \eqref{eq:des1},  we get
$$
\int_{\T^2}\rho z^4\,dx \lesssim \bar z^2\|\sqrt\rho z\|_{2}^2+
\bigl(\log n \|\sqrt\rho z\|_{2}^2+ n^{-2}\rho^*\|\nabla z\|_{2}^2\bigr)\|\nabla z\|_{2}^2.
$$
Taking for $n$ the closest positive integer to $\frac{\sqrt{\rho^*}\|\nabla z\|_{2}}{\|\sqrt \rho z\|_{2}},$ we end up with 
\begin{equation}\label{eq:interpopo}
\biggl(\int_{\T^2} \rho z^4\,dx\biggr)^{\frac12}\leq C \|\sqrt\rho z\|_{2}
\biggl(|\bar z|+\|\nabla z\|_{2}\log^{\frac12}\biggl(e+\frac{\rho^*\|\nabla z\|^2_{2}}{\|\sqrt \rho z\|^2_{2}}\biggr)\biggr)\cdotp
\end{equation}
Then, bounding  $\bar z$ as in the proof of Inequality \eqref{eq:limitpoincare} yields  \eqref{d0}.
\qed
\medbreak
Inequality \eqref{eq:interpopo} (and thus Lemma \ref{DLest}) may be easily adapted to any bounded domain $\Omega.$ 
Indeed,  as the estimates therein  are invariant by translation, one may assume with 
no loss of generality that  $\Omega\subset(0,R)^d$ for some $R>0.$  Now, any function in $H_0^1(\Omega)$ may 
be extended  by $0$ to a function of $H_0^1((0,R)^d),$ thus  to $H^1$ on the torus with size $R.$ 
Then one can apply the above results, and get   the same inequalities for  some constant depending only on $R.$
In order to track the dependency with respect to $R,$ it suffices to make 
 the change of function $z'(x):=z(Rx)$ and to  apply to $z'$ the inequality for some domain included in $(0,1)^d.$
For example, Inequality \eqref{eq:poincare} becomes  for all $z$ in $H^1_0(\Omega)$ with $\Omega$ of diameter less than $R,$ 
$$
\|z\|_{2}\leq \frac{R^{d/2}}{M}\biggl|\int_{\T^d}az\,dx\biggr|+R\biggl(1+\frac{R^{d/2}}M\|M-a\|_2\biggr)\|\nabla z\|_2.
$$

\medbreak
Finally,  let us  consider the twisted divergence equation. 
The following lemma  (in the spirit  of the corresponding result in \cite{DM-luminy}) is the key to the proof of uniqueness, in the torus case.
\begin{lem}\label{l:div} Let $A$ be a matrix valued function on $[0,T]\times\T^d$ 
satisfying 
\begin{equation}\label{eq:detA}
\det A\equiv1.
\end{equation}
There exists a  constant $c$ depending only on $d$, such that if 
 \begin{equation}\label{eq:smallA}
 \|\Id-A\|_{L_\infty(0,T;L_\infty)}+\|A_t\|_{L_2(0,T;L_6)}\leq c
 \end{equation}
 then for all function $g:[0,T]\times\T^d\to\R$ satisfying 
$g\in L^2(0,T\times\T^d)$ and 
$$ g=\div R\ \hbox{ with }\ R\in L_4(0,T;L_2)\ \hbox{ and }\ R_t\in L_{4/3}(0,T;L_{3/2}),
$$
the equation 
$$\div(Aw)=g \quad\hbox{in}\quad [0,T]\times \T^d$$
admits a solution $w$ in the  space 
$$X_T:=\Bigl\{v\in L_4(0,T;L_2(\T^d))\:, \: \nabla v\in L_2(0,T;L_2(\T^d))
\  \hbox{and}\ v_t\in L_{4/3}(0,T;L_{3/2}(\T^d))\Bigr\}
$$
satisfying the following  inequalities for some  constant $C=C(d)$: 
\begin{equation}\label{eq:diveq}\begin{array}{c}
\|w\|_{L_4(0,T;L_2)}\leq C\|R\|_{L_4(0,T;L_2)}, \quad
\|\nabla w\|_{L_2(0,T;L_2)}\leq C\|g\|_{L_2(0,T;L_2)}\\[1ex] \hbox{\rm and }\ 
\|w_t\|_{L_{4/3}(0,T;L_{3/2})}\leq C\|R\|_{L_4(0,T;L_{2})}+C\|R_t\|_{L_{4/3}(0,T;L_{3/2})}.\end{array}\end{equation}
\end{lem}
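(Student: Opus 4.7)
The plan is to solve the equation by a perturbative scheme around the ordinary divergence equation $\div w = g$ on the torus, taking full advantage of the smallness of $A - \Id.$ Let $\mathcal{B}$ denote a bounded right inverse of $\div$ on the subspace of mean-zero scalar fields of $\T^d$ (one may take $\mathcal{B}(f):=\nabla\Delta^{-1}f$). By the classical Calder\'on-Zygmund theory, $\mathcal{B}$ enjoys the two bounds $\|\mathcal{B}(\div R)\|_{L_p}\leq C\|R\|_{L_p}$ and $\|\nabla\mathcal{B}(f)\|_{L_p}\leq C\|f\|_{L_p}$ for $1<p<\infty.$ I would then rewrite $\div(Aw)=g$ as the perturbed equation $\div w = g-\div((A-\Id)w)$ and observe that both sides have vanishing spatial mean (the first because $g=\div R,$ the second because it is a divergence), so that a solution must be a fixed point of the map $\Phi(w):=\mathcal{B}(\div(R-(A-\Id)w)).$

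Given the smallness hypothesis \eqref{eq:smallA}, the first step is to produce the fixed point in $L_4(0,T;L_2(\T^d)).$ The first Calder\'on-Zygmund bound yields
$$
\|\Phi(w)(t)\|_{L_2}\leq C\|R(t)\|_{L_2}+C\|A(t)-\Id\|_{L_\infty}\|w(t)\|_{L_2},
$$
and integrating with the $L_4$ norm in time produces a contraction as soon as $\|A-\Id\|_{L_\infty(0,T;L_\infty)}$ is small enough; the limit $w$ then satisfies the first bound in \eqref{eq:diveq}. For the gradient estimate, I would exploit the Piola identity associated with $\det A\equiv1$ (in the Lagrangian setting where $A=(\nabla_yX)^{-1}$ with $\det\nabla_yX\equiv 1,$ this takes the form $\partial_iA_{ij}=0$), so that $\div((A-\Id)w)$ reduces to $(A-\Id):\nabla w$ with no $\nabla A$ term. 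That gives
$$
\|\nabla w\|_{L_2(0,T;L_2)}\leq C\|g\|_{L_2(0,T;L_2)}+C\|A-\Id\|_{L_\infty(0,T;L_\infty)}\|\nabla w\|_{L_2(0,T;L_2)},
$$
and smallness closes the estimate. For the time derivative I would differentiate $\div(Aw)=g=\div R$ in $t$ to get
$$
\div(A\,w_t)=\div(R_t-A_t\,w),
$$
that is, a divergence equation of the same type for $w_t$ with source $R_t-A_t\,w.$ Applying the $L_{3/2}$ version of the first estimate and using H\"older's inequality ($1/6+1/2=2/3$) yields
$$
\|w_t(t)\|_{L_{3/2}}\leq C\|R_t(t)\|_{L_{3/2}}+C\|A_t(t)\|_{L_6}\|w(t)\|_{L_2},
$$
and integrating with the $L_{4/3}$ norm in time, invoking \eqref{eq:smallA} together with the already obtained bound on $w$ in $L_4(0,T;L_2),$ produces the third estimate in \eqref{eq:diveq}.

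The main obstacle is the gradient estimate: treated naively, $\|\div((A-\Id)w)\|_{L_2}$ generates a $\|\nabla A\otimes w\|_{L_2}$ contribution which is \emph{not} controlled by the norms appearing in \eqref{eq:smallA}. The hypothesis $\det A\equiv1$ is precisely what bypasses this through the Piola identity; this is also the reason why the lemma is custom-tailored to the Lagrangian structure used in the uniqueness proof, where $\det\nabla_yX\equiv 1$ follows from the incompressibility constraint. The remaining parts of the argument amount to a routine Neumann iteration exploiting the linearity of the equation and the smallness of $A-\Id.$
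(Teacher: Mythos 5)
Your proposal is correct and follows essentially the same route as the paper: a fixed point/Neumann iteration for $\Phi(w)=\nabla\Delta^{-1}\div\bigl(R-(A-\Id)w\bigr)$ using boundedness of $\nabla\Delta^{-1}\div$ on mean-zero $L_p$ fields, the identity $\div(Az)={}^T\!A:\nabla z$ coming from $\det A\equiv1$ for the gradient bound, and time differentiation with H\"older ($1/6+1/2=2/3$) for the $w_t$ bound. The only cosmetic difference is that the paper contracts directly in the full space $X_T$ (which automatically guarantees that $w_t$ lives in $L_{4/3}(0,T;L_{3/2})$ before one estimates it), whereas you first contract in $L_4(0,T;L_2)$ and recover the other two bounds a posteriori; this is easily reconciled and does not change the substance.
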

\begin{proof}
Let  $g$ satisfy the conditions of the lemma. 
Then for any $v\in X_T,$ we set 
$$\displaylines{\quad\Phi(v):=\nabla\Delta^{-1}\div\bigl((\Id-A)v+R-\overline{(\Id-A)v+R}\bigr)\hfill\cr\hfill\quad\hbox{with }\ 
\overline{(\Id-A)v+R}:=\frac{1}{|\T^d|}\int_{\T^d}\bigl( (\Id-A)v+R\bigr)\,dx.\quad}
$$
It is obvious that $\Phi(v)$  satisfies the linear equation
$$
\div w=\div((\Id-A)v)+g\qquad\hbox{in }\ (0,T)\times\T^d.
$$
Furthermore the operator $\nabla\Delta^{-1}\div$
maps the set $L_{2,0}(\T^d)$ (of $L_2(\T^d)$ functions with zero average) to itself (and with norm $1$),  and  
$\bigl|\overline{(\Id-A)v+R}\bigr|\leq \|(\Id-A)v+R\|_2.$ Hence we have
$$\begin{aligned}
\|\Phi(v)\|_{L_4(0,T;L_2)}&\leq 2\|(\Id-A)v+R\|_{L_4(0,T;L_2)}\\
&\leq 2\|\Id-A\|_{L_\infty(0,T;L_\infty)}\|v\|_{L_4(0,T;L_2)}+2\|R\|_{L_4(0,T;L_2)}.\end{aligned}
$$
Then using the fact that \eqref{eq:detA} implies that (see e.g. the Appendix of \cite{DM-cpam})
\begin{equation}\label{eq:magictrick}\div(Az)=A^{T}:\nabla z,\end{equation}
we get 
$$\begin{aligned}
\|\nabla\Phi(v)\|_{L_2(0,T;L_2)}&\leq \|{}^T\!(\Id-A):\nabla v\|_{L_2(0,T;L_2)}
+\|g\|_{L_2(0,T;L_2)}\\&\leq    \|\Id-A\|_{L_\infty(0,T;L_\infty)}\|\nabla v\|_{L_2(0,T;L_2)}+\|g\|_{L_2(0,T;L_2)}.
\end{aligned}$$
And finally, because $((\Id-A)v)_t=(\Id-A)v_t-A_tv,$ we have
$$\begin{aligned}
\|(\Phi(v))_t\|_{L_{4/3}(0,T;L_{3/2})}&\leq C\bigl(\|(\Id-A)v_t\|_{L_{4/3}(0,T;L_{3/2})}
\!+\!\|A_tv\|_{L_{4/3}(0,T;L_{3/2})}\!+\!\|R_t\|_{L_{4/3}(0,T;L_{3/2})}\bigr)\\
&\leq   C\bigl(\|\Id-A\|_{L_\infty(0,T;L_\infty)}\|v_t\|_{L_{4/3}(0,T;L_{3/2})}
\\&\hspace{2.5cm}+\|A_t\|_{L_2(0,T;L_6)}\|v\|_{L_4(0,T;L_2)}+\|R_t\|_{L_{4/3}(0,T;L_{3/2})}\bigr)\cdotp
\end{aligned}
$$
This proves that $\Phi$ maps $X_T$ to $X_T.$ Then, obvious variations on the above computations
give for any couple $(v_1,v_2)$ in $X^2_T,$  if $c$ in  \eqref{eq:smallA} is small enough,
$$
\|\Phi(v_2)-\Phi(v_1)\|_{X_T}\leq  C\bigl(\|\Id-A\|_{L_\infty(0,T;L_\infty)}+\|A_t\|_{L_2(0,T;L_6)}\bigr)
\|v_2-v_1\|_{X_T}\leq \frac12\|v_2-v_1\|_{X_T}.
$$
Hence, applying the standard Banach fixed point theorem in $X_T$ provides 
a  solution to the equation $\Phi(v)=v.$ Then looking back at the above 
computations in the case $\Phi(v)=v$ gives the desired inequalities. 
\end{proof}
In the bounded domain case, the previous lemma can be adapted as follows.
\begin{lem}\label{l:div-dom} Let $\Omega$ be a $\cC^2$ bounded domain of $\R^d,$ and  $A,$  a matrix valued function on $[0,T]\times\Omega$ 
satisfying  \eqref{eq:detA}. If  \eqref{eq:smallA} is fulfilled  
 then for all function $R:[0,T]\times\Omega\to\R^d$ satisfying 
$\div R\in L^2(0,T\times\Omega),$ $R\in L_4(0,T;L_2),$ $R_t\in L_{4/3}(0,T;L_{3/2})$ and $R\cdot n\equiv0$ on $(0,T)\times\d\Omega,$
the equation 
$$\div(Aw)=\div R=:g \quad\hbox{in}\quad [0,T]\times \Omega$$
admits a solution in the  space 
$$X_T:=\Bigl\{v\in L_2(0,T;H^1_0(\Omega))\:, \:  v\in L_4(0,T;L_2(\Omega))
\  \hbox{and}\ v_t\in L_{4/3}(0,T;L_{3/2}(\Omega))\Bigr\},
$$
that satisfies Inequalities \eqref{eq:diveq}.
\end{lem}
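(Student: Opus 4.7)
The plan is to run essentially the same fixed-point argument as in the proof of Lemma \ref{l:div}, replacing the Fourier-multiplier solver $\nabla\Delta^{-1}\div$ (which is not available on a bounded domain) by a Bogovski\u{i}-type right inverse of the divergence adapted to the $\cC^2$ domain $\Omega$. More precisely, I would invoke the classical Bogovski\u{i} operator $\cB_\Omega$ (see e.g. Galdi's monograph): for every $p\in(1,\infty)$, $\cB_\Omega$ is a bounded linear operator from the mean-zero subspace of $L_p(\Omega)$ into $W^{1,p}_0(\Omega;\R^d)$ satisfying $\div\cB_\Omega h=h$ with $\|\nabla\cB_\Omega h\|_{L_p}\leq C\|h\|_{L_p}$, and it has the (less elementary) extra continuity
$$\cB_\Omega \div F\in L_p(\Omega)\ \hbox{ with }\ \|\cB_\Omega\div F\|_{L_p}\leq C\|F\|_{L_p}\quad\hbox{whenever}\ F\in L_p(\Omega;\R^d),\ F\cdot n|_{\d\Omega}=0.$$

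Next I would rewrite the twisted equation as $\div w=\div((\Id-A)v+R)$ and set up a fixed-point map on $X_T$ through
$$\Phi(v):=\cB_\Omega\Bigl(\div\bigl((\Id-A)v+R\bigr)\Bigr).$$
The compatibility is automatic: since $v\in L_2(0,T;H^1_0(\Omega))$ we have $v|_{\d\Omega}=0$ hence $(\Id-A)v\cdot n|_{\d\Omega}=0$, while $R\cdot n|_{\d\Omega}=0$ by assumption, so for a.e. $t$ the source $\div((\Id-A)v+R)$ has zero mean on $\Omega$. The mapping property of $\cB_\Omega$ then places $\Phi(v)$ in $H^1_0(\Omega)$, which is exactly what is needed to live in $X_T$.

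I would then reproduce line-by-line the three a priori bounds of the torus argument, using the identity \eqref{eq:magictrick} (which only relies on $\det A\equiv 1$) to rewrite $\div((\Id-A)v)={}^T\!(\Id-A):\nabla v$, and the time-independence of $\cB_\Omega$ to justify $(\Phi(v))_t=\cB_\Omega\div((\Id-A)v_t-A_tv+R_t)$:
$$
\begin{aligned}
\|\Phi(v)\|_{L_4(0,T;L_2)} &\leq C\|\Id-A\|_{L_\infty(0,T;L_\infty)}\|v\|_{L_4(0,T;L_2)}+C\|R\|_{L_4(0,T;L_2)},\\
\|\nabla\Phi(v)\|_{L_2(0,T;L_2)} &\leq C\|\Id-A\|_{L_\infty(0,T;L_\infty)}\|\nabla v\|_{L_2(0,T;L_2)}+C\|\div R\|_{L_2(0,T;L_2)},\\
\|(\Phi(v))_t\|_{L_{4/3}(0,T;L_{3/2})} &\leq C\|\Id-A\|_{L_\infty(0,T;L_\infty)}\|v_t\|_{L_{4/3}(0,T;L_{3/2})}\\
&\quad+C\|A_t\|_{L_2(0,T;L_6)}\|v\|_{L_4(0,T;L_2)}+C\|R_t\|_{L_{4/3}(0,T;L_{3/2})}.
\end{aligned}
$$
Exactly the same computation applied to $\Phi(v_1)-\Phi(v_2)=\cB_\Omega\div((\Id-A)(v_1-v_2))$ yields
$$\|\Phi(v_1)-\Phi(v_2)\|_{X_T}\leq C\bigl(\|\Id-A\|_{L_\infty(0,T;L_\infty)}+\|A_t\|_{L_2(0,T;L_6)}\bigr)\|v_1-v_2\|_{X_T},$$
which is a contraction under \eqref{eq:smallA}, provided the constant $c$ there is chosen small enough (depending on $d$ and $\Omega$ through the Bogovski\u{i} operator norms). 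Banach's fixed point theorem then furnishes a unique $w\in X_T$ with $\Phi(w)=w$, and evaluating the three bounds above at $v=w$ (absorbing the small $\|\Id-A\|_{L_\infty L_\infty}$ factor) gives \eqref{eq:diveq}.

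The main obstacle, compared with the torus proof, is not in the fixed-point iteration itself but in securing the extended continuity $\cB_\Omega\div:\{F\in L_p:F\cdot n|_{\d\Omega}=0\}\to L_p(\Omega)$, which is required both for the $L_4(0,T;L_2)$ bound on $\Phi(v)$ and for the time-derivative bound. This property is not as immediate as the standard $L_p\!\to\!W^{1,p}_0$ mapping but is classical; it follows from the explicit representation of $\cB_\Omega$ as a sum of singular integral operators (after a partition of unity reducing to the star-shaped case) together with a Calder\'on-Zygmund estimate. Once this is accepted, the rest of the proof is a routine transcription of the torus argument.
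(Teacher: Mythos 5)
Your proof is correct and follows essentially the same route as the paper: the paper's Lemma~\ref{l:div-dom} is also proved by a Banach fixed point argument for $\Phi(v)=\cB\bigl((\Id-A)v+R\bigr)$, where $\cB$ (Theorem~3.1 of \cite{DM-luminy}) is precisely a right inverse of the divergence on $\Omega$ enjoying the two continuity properties you require of $\cB_\Omega$, namely $L_p\to L_p$ boundedness at the level of the vector field and $L_p\to W^1_{p,0}$ boundedness of $\div k\mapsto u$ under the compatibility condition $k\cdot n|_{\d\Omega}=0$. The only difference is the reference invoked for that operator (the Bogovski\u{\i} operator versus \cite{DM-luminy}); the fixed-point map, the three estimates, and the contraction argument are identical.
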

\begin{proof}
This is essentially Theorem 4.1 of \cite{DM-luminy} adapted to time dependent coefficients.
 Recall that Theorem 3.1
 of \cite{DM-luminy} states that there exists a linear operator $\cB:k\to u$ that is continuous on 
 $L_p(\Omega;\R^d)$    (for all  $1<p<\infty$) 
so that for all $k$ in $L_p(\Omega;\R^d)$ 
the vector field $u$ satisfies  
\begin{equation}\label{eq:DIV}\int_\Omega u\cdot\nabla\phi\,dx=\int_\Omega k\cdot\nabla\phi\,dx\quad\hbox{for all }\ \phi\in\cC^\infty(\overline\Omega;\R),\end{equation}
and such that, if in addition $\div k$ is in $L_p(\Omega)$ and $k\cdot n|_{\d\Omega}=0$  then 
$u$ is in $W^1_{p,0}(\Omega)$ with
$$\|u\|_{W^1_p(\Omega)}\leq C\|\div k\|_{L_p(\Omega)}.$$

Then we define $\Phi$ on the set $X_T$ (treating the time variable as a parameter) by
\begin{equation}\label{eq:B}
\Phi(v):=\cB\bigl((\Id-A)v +R\bigr).
\end{equation}
The above result ensures that 
$$\begin{aligned}
\|\nabla\Phi(v)\|_{L_2(0,T;L_2)}&\lesssim\|{}^T\!(\Id-A):\nabla v+g\|_{L_2(0,T;L_2)}\\
&\lesssim\|g\|_{L_2(0,T;L_2)}+\|\Id-A\|_{L_\infty(0,T;L_\infty)}\|\nabla v\|_{L_2(0,T;L_2)},
\end{aligned}$$
as well as 
$$\begin{aligned}
\|\Phi(v)\|_{L_4(0,T;L_2)}&\lesssim  \|(\Id-A)v +R\|_{L_4(0,T;L_2)}\\&\lesssim\|\Id-A\|_{L_\infty(0,T;L_\infty)}\|v\|_{L_4(0,T;L_2)}+R\|_{L_4(0,T;L_2)}.
\end{aligned}$$
Moreover,  differentiating \eqref{eq:B} with respect to time gives $(\Phi(v))_t=\cB((\Id-A)v_t-A_t v+R_t)$
whence, using the continuity property of $\cB$ on $L_{3/2}(\Omega),$ then performing a time integration, 
$$\begin{aligned}
\|(\Phi(v))_t\|_{L_{4/3}(0,T;L_{3/2})} &\lesssim \|(\Id-A)v_t-A_t v\|_{L_{4/3}(0,T;L_{3/2})}+\|R_t\|_{L_{4/3}(0,T;L_{3/2})}\\[1ex]
&\lesssim\|\Id-A\|_{L_\infty(0,T;L_\infty)}\|v_t\|_{L_{4/3}(0,T;L_{3/2})}\\&\hspace{3.4cm}+\|A_t\|_{L_2(0,T;L_6)}\|v\|_{L_4(0,T;L_2)}+\|R_t\|_{L_{4/3}(0,T;L_{3/2})},\end{aligned}
$$
and one can  conclude by the Banach fixed point theorem, exactly as in  Lemma \ref{l:div}. 
\end{proof}

\noindent{\bf Acknowledgments.} This work was partially supported by the Simons - Foundation grant 346300 and the Polish Government MNiSW 2015-2019 matching fund.
The first author (R.D.)  is also  supported by ANR-15-CE40-0011.
The second author (P.B.M.) has been partly supported by National  Science  Centre  grant 2014/14/M/ST1/00108 (Harmonia).

\end{document}